\declaretheorem[numberwithin=section]{theorem}
\declaretheorem[sibling=theorem, name=Lemma]{lem}
\declaretheorem[sibling=theorem, name=Remark]{rem}
\numberwithin{equation}{section}
\newcommand{\norm}[1]{\left\lVert#1\right\rVert}
\newcommand{\abs}[1]{\left\lvert#1\right\rvert}
\newcommand{\set}[1]{\left\{#1\right\}}
\newcommand{\hin}[2]{\left\langle#1,#2\right\rangle}
\newcommand*{\To}{\longrightarrow}
\newcommand*{\Rmn}[1]{\uppercase\expandafter{\romannumeral#1}}
\newcommand*{\dif}{\mathop{}\!\mathrm{d}}
\DeclareMathOperator*{\esssup}{ess\,sup}
\journal{}
\begin{document}

\begin{frontmatter}

\title{Global existence and convergence of a flow to Kazdan-Warner equation with  non-negative prescribed function\tnoteref{SZ}}

\author[whu1,whu2]{Linlin Sun}
\address[whu1]{School of Mathematics and Statistics, Wuhan University, Wuhan 430072, China}
\address[whu2]{Hubei Key Laboratory of Computational Science, Wuhan University, Wuhan, 430072, China}
\ead{sunll@whu.edu.cn}

\author[mpi]{Jingyong Zhu\corref{zjy}}
\address[mpi]{Max Planck Institute for Mathematics in the Sciences, Inselstrasse 22, 04103 Leipzig, Germany}
\ead{jizhu@mis.mpg.de}


\cortext[zjy]{Corresponding author.}

\tnotetext[SZ]{This research is partially supported by the National Natural Science Foundation of China (Grant Nos. 11971358, 11801420) and the Youth Talent Training Program of Wuhan University. The first author would like thank the Max Planck Institute for Mathematics in the Sciences for good working conditions when this work was carried out.}

\begin{abstract}
We consider an evolution problem associated to the Kazdan-Warner equation on a closed Riemann surface $(\Sigma,g)$
\begin{align*}
    -\Delta_{g}u=8\pi\left(\frac{he^{u}}{\int_{\Sigma}he^{u}{\rm d}\mu_{g}}-\frac{1}{\int_{\Sigma}{\rm d}\mu_{g}}\right)
\end{align*}
where the prescribed function $h\geq0$ and $\max_{\Sigma}h>0$. We prove the global existence and convergence under additional assumptions such as
\begin{align*}
    \Delta_{g}\ln h(p_0)+8\pi-2K(p_0)>0
\end{align*}
for any maximum point $p_0$ of the sum of $2\ln h$ and the regular part of the Green function, where $K$ is the Gaussian curvature of $\Sigma$. In particular, this gives a new proof of the existence result by Yang and Zhu [Proc. Amer. Math. Soc. 145 (2017), no. 9, 3953-3959] which generalizes   existence result of Ding, Jost, Li and Wang [Asian J. Math. 1 (1997), no. 2, 230-248] to the non-negative prescribed function case.
\end{abstract}

\begin{keyword}
Kazdan-Warner equation \sep mean field type flow \sep global existence \sep global convergence.

 \MSC[2010] 35B33 \sep 	58J35

\end{keyword}

\end{frontmatter}

\section{Introduction}

Let $\Sigma$ be a closed Riemann surface with a fixed conformal structure. Choose a conformal metric $g$ in the conformal class such that the area of $\Sigma_{g}\coloneqq\left(\Sigma,g\right)$ is one. Let $h$ be a non-negative but nonzero smooth function on $\Sigma$. We consider the following Kazdan-Warner equation
\begin{align}\label{eq:kw8pi}
-\Delta_{g}u=8\pi\left(\dfrac{he^{u}}{\int_{\Sigma}he^{u}\dif\mu_{g}}-1\right).
\end{align}
Here $\Delta_{g}$ is the Laplace–Beltrami operator. The solutions to \eqref{eq:kw8pi} are the critical points of the following functional:
\begin{align*}
J(u)\coloneqq\int_{\Sigma}\left(\dfrac12\abs{\nabla_{g}u}_{g}^2+8\pi u\right)\dif\mu_{g}-8\pi\ln\left(\int_{\Sigma}he^{u}\dif\mu_{g}\right).
\end{align*}

Many mathematicians have contributed to the study of Kazdan-Warner equation. Forty years ago, Kazdan and Warner \cite{kazdan1974curvature} considered the solvability of the equation
\begin{align*}
    -\Delta_{g}u=he^u-\rho,
\end{align*}
where $\rho$ is a constant and $h$ is some smooth prescribed function. When $\rho>0$, the equation above is equivalent to
\begin{align*}
    -\Delta_{g}u=\rho (he^u-1).
\end{align*}
In particular, when $\Sigma_{g}$ is the standard sphere $\mathbb{S}^2$, it is called the Nirenberg problem, which comes from the conformal geometry. It has been studied by Moser \cite{moser1971sharp}, Kazdan and Warner \cite{kazdan1974curvature}, Chen
and Ding \cite{chen1987scalar}, Chang and Yang \cite{chang1987prescribing} and others.

The Kazdan-Warner equation can be also viewed as a special case of the following mean field equation:
\begin{align}\label{mean-field}
    -\Delta_{g}u=\rho\left(\dfrac{fe^{u}}{\int_{\Sigma}fe^{u}\dif\mu_{g}}-1\right),
\end{align}
where $f$ is a smooth function on $\Sigma$. The mean field equation \eqref{mean-field} appears in various context such as the abelian Chern-Simons-Higgs models (see for example \cite{caffarelli1995vortex,tarantello1996multiple,yang2013solitons}). When $f>0$,  the equation \eqref{mean-field} is equivalent to the following equation:
\begin{align}\label{mean-field-type}
      -\Delta_{g}u=\rho\dfrac{e^{u}}{\int_{\Sigma}e^{u}\dif\mu_{g}}-Q,
\end{align}
where $Q\in C^\infty(\Sigma)$ is a given function such that $\int_\Sigma Q\dif\mu_{g}=\rho$. The existence of solutions
of \eqref{mean-field-type} has been widely studied in recent decades. Many partial existence results have been obtained for noncritical cases according to the Euler characteristic
of $\Sigma$ (see for example Brezis and Merle \cite{BreMer91}, Chen and Lin \cite{chen2003topological}, Ding, Jost, Li and Wang \cite{ding1999existence}, Lin \cite{Lin00topological}, Malchiodi  \cite{Mal08morse} and the references therein). Djadli \cite{djadli2008existence}
established the existence of solutions for all surfaces $\Sigma$ when $\rho\neq 8k\pi$ by studying the topology of sublevels to achieve a min-max scheme which already introduced by Djadli
and Malchiodi in \cite{DM2008existence}.

The following evolution problem associated to \eqref{mean-field-type} was also well studied by Cast{\'e}ras  for noncritical cases.
\begin{align}\label{eq:mean-field-type-flow}
\dfrac{\partial e^{u}}{\partial t}=\Delta_{g}u+\rho\dfrac{e^u}{\int_\Sigma e^u\dif\mu_{g}}-Q,\quad u(\cdot,0)=u_0
\end{align}
where $u_0\in C^{2+\alpha}(\Sigma)$. This flow possesses a structure that is very similar to the Calabi and Ricci-Hamilton flows.  When $Q$ is a constant equal to the scalar curvature of $\Sigma$ with respect to the metric $g$, the flow \eqref{eq:mean-field-type-flow} has been studied by Struwe  \cite{struwe2002curvature}. A flow approaching to Nirenberg's problem was studied by Struwe in \cite{Str05flow}. The global existence and convergence of \eqref{eq:mean-field-type-flow} were proved by Cast{\'e}ras in \cite{casteras2015mean}. However, the convergence result there does not include the critical cases, i.e. $\rho=8k\pi$ for $k\in\mathbb{N}$. Recently, when $\rho=8\pi$, a sufficient condition for convergence was given by Li and Zhu in \cite{li2019convergence}. This gives a new proof of the result of Ding, Jost, Li and Wang in \cite{DinJosLiWan97} which was extended by Lin and Chen to  general critical cases \cite{CheLin02sharp} and recently  generalized by Yang and Zhu to  non-negative prescribed function cases in \cite{yang2017remark}.

Motivated by these results, we consider the following evolution problem for \eqref{eq:kw8pi} with non-negative prescribed function:
\begin{align}\label{eq:mean-field-flow}
\dfrac{\partial e^{u}}{\partial t}=\Delta_{g}u+8\pi\left(\dfrac{he^{u}}{\int_{\Sigma}he^{u}\dif\mu_{g}}-1\right),\quad u(\cdot,0)=u_0
\end{align}
where $u_0\in H^{2}(\Sigma)$  and $h$ is a non-negative but nonzero smooth function on $\Sigma$. Since the prescribed function $h$ may be zero on some nonempty subset of $\Sigma$, the global existence and convergence of this flow are subtle. Precisely, we can not use the lower bound of $h$ to do a priori estimates. Therefore, Cast{\'e}ras's proof of global existence for positive prescribed function does not apply to our situation. In addition, the condition (ii) of (1.6) in Cast{\'e}ras's compactness result \cite{casteras2015mean1} actually assumes 
\begin{align*}
    -\frac{\partial e^{u_n}}{\partial t}+\rho e^{u_n}\geq -C, \quad \forall x\in \Sigma, \forall n\geq 1,
\end{align*}
for a sequence of time-slices $u_n\coloneqq u(\cdot,t_n)$. This condition was proved in Proposition 2.1 \cite{casteras2015mean}. However, the proof also need the prescribed function $h$ to be positive. Thus, our a priori estimates in the proof of global existence and blow-up analysis used in the proof of global convergence are both new.

First, we prove the global existence of the flow \eqref{eq:mean-field-flow}.
\begin{theorem}[Global existence]\label{global-existence}
For $u_0\in H^2(\Sigma)$, there is a unique global solution $u\in C^{\infty}\left(\Sigma\times(0,\infty)\right)$ to \eqref{eq:mean-field-flow} with
\begin{align*}
u\in \cap_{0<T<\infty}\left(L^{\infty}\left(0,T;H^2\left(\Sigma\right)\right)\cap H^1\left(0,T;H^1\left(\Sigma\right)\right)\cap H^2\left(0,T;H^{-1}\left(\Sigma\right)\right)\right).
    \end{align*}
    Moreover, for every $0<T<\infty$, there is a positive constant $C\left(T,\norm{u_0}_{H^2\left(\Sigma\right)}\right)$ depending only on $T$, the upper bound of $\norm{u_0}_{H^2\left(\Sigma\right)}$ and $\Sigma_{g}$,
    \begin{align}\label{eq:a-priori}
        \esssup_{0\leq t\leq T}\norm{u(t)}_{H^2\left(\Sigma\right)}+\left(\int_0^T\left(\norm{\dfrac{\partial u(t)}{\partial t}}_{H^1\left(\Sigma\right)}^2+\norm{\dfrac{\partial^2 u(t)}{\partial t^2}}_{H^{-1}\left(\Sigma\right)}^2\right)\dif t\right)^{1/2}\leq C\left(T,\norm{u_0}_{H^2\left(\Sigma\right)}\right),
    \end{align}
    where $u(t)\coloneqq u(\cdot,t)$.
    In particular, if $u_0$ is smooth, then $u$ is smooth. Here the Sobolev spaces $H^{k}(0,T;X)\coloneqq W^{k,2}(0,T;X)$ and  $W^{k,p}(0,T;X)$ consists of all functions $u\in L^p(X\times[0,T])$ such that $\frac{\partial u}{\partial t},\dotsc,\frac{\partial^k}{\partial t^k}$ exists in the weak sense and belongs to $L^p(X\times[0,T])$ and
    \begin{align*}
\norm{u}_{W^{k,p}(0,T;X)}\coloneqq\begin{cases}
\left(\int_0^T\left(\norm{u(t)}_X^p+\sum_{i=1}^k\norm{\frac{\partial t^iu(t)}{\partial t^i}}_{X}^p\right)\dif t\right)^{1/p},& 1\leq p<\infty,\\
\esssup\limits_{0\leq t\leq T}\left(\norm{u(t)}_X+\sum_{i=1}^k\norm{\frac{\partial t^iu(t)}{\partial t^i}}_{X}\right),& p=\infty.
\end{cases}
\end{align*}
\end{theorem}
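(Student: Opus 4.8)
The plan is to obtain a local solution by quasilinear parabolic theory, record the two structural laws the flow satisfies, and then bootstrap to the a priori bound \eqref{eq:a-priori} on an arbitrary finite interval, after which global existence follows by the usual continuation argument. For the local part, note that since $e^{u}>0$ one may divide \eqref{eq:mean-field-flow} by $e^{u}$ to recast it as the quasilinear parabolic problem
\begin{align*}
\partial_t u=e^{-u}\Delta_{g}u+8\pi e^{-u}\left(\frac{he^{u}}{\int_{\Sigma}he^{u}\dif\mu_{g}}-1\right),\qquad u(\cdot,0)=u_0 ,
\end{align*}
which is uniformly parabolic with smooth coefficients whenever $u$ lies in a bounded subset of $L^{\infty}(\Sigma)$. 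A contraction-mapping argument built on linear parabolic $L^{p}$ and Schauder estimates---as in Cast\'{e}ras's treatment of the case $h>0$, the local construction being insensitive to the sign of $h$---produces a unique solution on a maximal interval $[0,T^{\ast})$ in the regularity class of the statement, with the dichotomy $T^{\ast}=\infty$ or $\limsup_{t\to T^{\ast}}\norm{u(t)}_{H^{2}(\Sigma)}=\infty$; uniqueness also follows from a Gronwall estimate for the difference of two solutions. It thus suffices to prove \eqref{eq:a-priori} on every $[0,T]$ with $0<T<\infty$.

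Two structural laws are available. Integrating \eqref{eq:mean-field-flow} over $\Sigma$ gives $\frac{d}{dt}\int_{\Sigma}e^{u}\dif\mu_{g}=0$, so $\int_{\Sigma}e^{u(t)}\dif\mu_{g}\equiv\int_{\Sigma}e^{u_0}\dif\mu_{g}=:a$. Testing \eqref{eq:mean-field-flow} with $\partial_t u$ and rearranging yields
\begin{align*}
\frac{d}{dt}J(u(t))=-\int_{\Sigma}e^{u}\abs{\partial_t u}^{2}\dif\mu_{g}\le 0 ,
\end{align*}
and, $8\pi$ being the critical constant in the Moser--Trudinger inequality, $J$ is bounded below (the two quadratic gradient contributions cancel, so $J$ is bounded but not coercive). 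Integrating in time, $\int_{0}^{\infty}\int_{\Sigma}e^{u}\abs{\partial_t u}^{2}\dif\mu_{g}\dif t\le J(u_0)-\inf_{H^{1}(\Sigma)}J<\infty$.

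The core step is the a priori $L^{\infty}$ and $H^{2}$ control, and this is where $h\ge 0$ rather than $h>0$ forces new ideas, no positive lower bound of $h$ being available. The first and hardest task is a lower bound $\int_{\Sigma}he^{u(t)}\dif\mu_{g}\ge c(T)>0$ on $[0,T]$: since $h$ may vanish this is not pointwise information and must be extracted from a coupled analysis of the energy inequality $J(u(t))\le J(u_0)$, the conservation law $\int_{\Sigma}e^{u}=a$, and a differential inequality for the mean $\overline{u}(t)\coloneqq\int_{\Sigma}u(t)\dif\mu_{g}$ (Jensen's inequality already gives $\overline{u}(t)\le\ln a$). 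This in turn controls the reaction term $f\coloneqq8\pi\left(he^{u}/\!\int_{\Sigma}he^{u}\dif\mu_{g}-1\right)$ of \eqref{eq:mean-field-flow}, after which a parabolic Moser-type iteration for $e^{u}\partial_t u=\Delta_{g}u+f$, fed by the space-time bound above and by $\int_{\Sigma}e^{u}=a$, produces two-sided bounds $c(T)\le u\le C(T)$ on $\Sigma\times[0,T]$. The equation is then uniformly parabolic with bounded coefficients: elliptic regularity for $\Delta_{g}u=e^{u}\partial_t u-f$ yields the $L^{\infty}(0,T;H^{2})$ bound, and differentiating \eqref{eq:mean-field-flow} in $t$, testing with $\partial_t u$, and using $\partial_t^{2}(e^{u})=\Delta_{g}\partial_t u+\partial_t f$ to read off $\partial_t^{2}u$ in $H^{-1}$ (difference quotients in $t$ making this rigorous) yields the remaining norms in \eqref{eq:a-priori}; parabolic smoothing then upgrades $u$ to $C^{\infty}(\Sigma\times(0,\infty))$, and to $C^{\infty}(\Sigma\times[0,\infty))$ when $u_0$ is smooth. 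Since $\norm{u(t)}_{H^{2}(\Sigma)}$ cannot blow up in finite time, $T^{\ast}=\infty$.

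I expect the decisive difficulty to be exactly this two-sided $L^{\infty}$ estimate---concretely, trapping $\int_{\Sigma}he^{u(t)}\dif\mu_{g}$ away from zero without a positive lower bound on $h$---which is precisely the point at which Cast\'{e}ras's a priori estimates for positive prescribed function break down; that $8\pi$ is critical compounds the difficulty, since it rules out a uniform $H^{1}$ bound obtained directly from the monotonicity of $J$.
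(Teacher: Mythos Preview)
Your diagnosis of where the difficulty lies is inverted, and the proposed cure has a gap.

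The lower bound on $\int_\Sigma he^{u(t)}\dif\mu_g$ is \emph{not} the hard step. Once you have monotonicity of $J$ and the conservation law $\int_\Sigma e^{u(t)}=a$, the Moser--Trudinger inequality applied to the definition of $J$ gives
\begin{align*}
J(u_0)\ge J(u(t))\ge 8\pi\ln a-8\pi c-8\pi\ln\int_\Sigma he^{u(t)}\dif\mu_g,
\end{align*}
so $\int_\Sigma he^{u(t)}\dif\mu_g$ is bounded below by a positive constant depending only on $\norm{u_0}_{H^1}$, uniformly in $t$. Your claim that ``$8\pi$ being critical'' forces $\inf J=-\infty$ and that the lower bound must therefore be obtained by some other coupled analysis is beside the point: only the \emph{upper} bound $J(u(t))\le J(u_0)$ is used here, not a lower bound on $J$.

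The genuine gap is the step you gloss: ``a parabolic Moser-type iteration \dots\ produces two-sided bounds $c(T)\le u\le C(T)$.'' Written as $\partial_t u=e^{-u}\Delta_g u+\dotsb$, the diffusion coefficient $e^{-u}$ is uniformly elliptic only if $u$ is already bounded from below; written as $e^u\partial_t u=\Delta_g u+f$, the time weight $e^u$ is controlled only if $u$ is already bounded from above. Either way a standard Moser iteration presupposes precisely the two-sided $L^\infty$ control you are trying to extract, so the argument as sketched is circular, and you give no indication of how to break that circularity from the ingredients listed.

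The paper bypasses $L^\infty$ bounds entirely. The route is: (i) differentiate $\int_\Sigma e^{pu}\dif\mu_g$ in $t$ and use the lower bound on $\int_\Sigma he^u$ to obtain $\int_\Sigma e^{pu(t)}\dif\mu_g\le C(p,T)$ for every $p\ge 1$---this finite-time $L^p$ control of $e^u$ replaces pointwise bounds; (ii) a measure-theoretic argument (the set $A(t)=\{e^{u(t)}\ge a/2\}$ has area bounded below via the $L^2$ bound on $e^u$, and on $A(t)$ one controls $\int u$) together with Poincar\'e closes an $H^1$ estimate depending on $T$ and $\norm{u_0}_{H^1}$---criticality of $8\pi$ is harmless because the constants may grow in $T$; (iii) with $H^1$ in hand, differentiate $\int_\Sigma\abs{\Delta_g u}^2$, set $w=e^{u/2}\partial_t u$, and use the Ladyzhenskaya interpolation $\norm{f}_{L^4}^2\le c\norm{f}_{L^2}\norm{f}_{H^1}$ together with $\int_0^T\norm{w}_{L^2}^2\dif t=J(u_0)-J(u(T))\le C$ to close a Gronwall inequality for $\norm{\Delta_g u(t)}_{L^2}^2$. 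Only a posteriori does $H^2\hookrightarrow C^0$ yield $L^\infty$ control.
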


Then it is interesting to consider the convergence of the flow. To do so, we begin with the monotonicity formula. It gives us that a sequence of positive numbers $t_n\to\infty$ as $n\to\infty$ with
\begin{align*}
    \int_{\Sigma}e^{u_n}\left|\frac{\partial u_n}{\partial t}\right|^2\dif\mu_{g}\to0, \quad \text{as} \quad n\to\infty,
\end{align*}
where $u_n\coloneqq u(t_n)$. If $\|u_n\|_{H^2(\Sigma)}$ is uniformly bounded, then $u_n$ subsequentially converges to a smooth solution of \eqref{eq:kw8pi}. Otherwise, we can get the following lower bound of the functional $J$ along the flow \eqref{eq:mean-field-flow}.

\begin{theorem}\label{thm:lower-bounds}
If the flow \eqref{eq:kw8pi} develops a singularity at the infinity, then we have 
\begin{align*}
    J(u(t))\geq C_0=-4\pi\max_{x\in\Sigma}(A(x)+2\ln h(x))-8\pi\ln\pi-8\pi,\quad\forall t\geq0,
\end{align*}
where $A$ is the regular part of the Green function $G$ which has the following expansion in the normal coordinate system:
\begin{align*}
    G(x,p)=-4\ln r+A(p)+b_1x_1+b_2x_2+c_1x_1^2+2c_2x_1x_2+c_3x_2^2+O(r^3),
\end{align*}
where $r(x)=\mathrm{dist}_g(x,p)$.
\end{theorem}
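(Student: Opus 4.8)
The plan is to follow the classical blow-up analysis for the mean field equation at the critical parameter $8\pi$, adapted to the flow and to a non-negative prescribed function $h$. Since the flow is assumed to develop a singularity at infinity, we have a sequence $u_n = u(t_n)$ with $\int_\Sigma e^{u_n}|\partial_t u_n|^2\dif\mu_g\to0$ but $\|u_n\|_{H^2(\Sigma)}$ unbounded. The first step is to normalize: writing $f_n \coloneqq \partial_t u_n$, the equation along the flow reads $-\Delta_g u_n = 8\pi\bigl(\tfrac{he^{u_n}}{\int_\Sigma he^{u_n}\dif\mu_g}-1\bigr) + e^{u_n}f_n$, so up to the small error term $e^{u_n}f_n$ (controlled in a suitable negative Sobolev or $L^1$ sense using the monotonicity input) the $u_n$ behave like a Palais--Smale sequence for $J$. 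I would then invoke a Brezis--Merle type alternative: after subtracting the average and rescaling, either $u_n$ stays bounded (excluded) or it concentrates, and because the total mass is exactly $8\pi$, there is a single blow-up point $p$ and $\tfrac{8\pi he^{u_n}}{\int_\Sigma he^{u_n}\dif\mu_g}\rightharpoonup 8\pi\delta_p$ weakly as measures. A key subtlety here, and the one genuinely new to the non-negative case, is that $p$ must lie in the open set $\{h>0\}$: if $h(p)=0$ then $he^{u_n}$ cannot concentrate enough mass at $p$ to form the Dirac mass, because near a zero of $h$ one has $h\le C\,\mathrm{dist}_g(\cdot,p)^2$ (as $h\ge0$ is smooth with a minimum there), and a standard Pohozaev/capacity estimate shows this vanishing order kills the concentration. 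So I would first establish $h(p)>0$, reducing matters essentially to the positive case near $p$.

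The second step is the precise local expansion near the blow-up point, following Li's work and the Ding--Jost--Li--Wang / Chen--Lin analysis. Let $\lambda_n \coloneqq u_n(x_n) = \max_\Sigma u_n \to\infty$ with $x_n\to p$, and rescale $v_n(y) \coloneqq u_n(x_n + r_n y) - \lambda_n$ with $r_n^2 e^{\lambda_n}$ chosen to balance the equation; the error term $e^{u_n}f_n$ rescales to something negligible by the monotonicity bound. Then $v_n\to v$ where $v$ solves $-\Delta v = 8\pi h(p)/(\cdots)\, e^v$ on $\mathbb{R}^2$ with $\int_{\mathbb{R}^2}e^v<\infty$, which by Chen--Li classification is the standard bubble with $\int e^v = 8\pi/(8\pi) \cdot(\text{normalization})$, i.e. $v(y) = -2\ln\bigl(1+\tfrac{\pi h(p)}{\int_\Sigma he^{u_n}\dif\mu_g}|y|^2\bigr)$ up to translation. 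Away from $p$, $u_n - \bar u_n \to G(\cdot,p)$ in $C^2_{loc}(\Sigma\setminus\{p\})$ where $G$ is the Green function normalized as in the statement. Matching the inner bubble to the outer Green function across the neck, and carefully accounting for the contribution of $2\ln h$ near $p$ (since $\ln(he^{u_n}) = \ln h + u_n$ and $\ln h$ is smooth at $p$), produces an expansion of $\max_\Sigma u_n$ and hence of $J(u_n)$.

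The third step is to actually extract the lower bound on $J$. Using $J(u_n) = \int_\Sigma(\tfrac12|\nabla u_n|^2 + 8\pi u_n)\dif\mu_g - 8\pi\ln\int_\Sigma he^{u_n}\dif\mu_g$, I would split $\int|\nabla u_n|^2$ into the neighborhood $B_\delta(p)$ and its complement. On $B_\delta(p)$ the gradient energy is computed from the bubble profile and contributes $\tfrac12\cdot 32\pi\ln(1/r_n) + o(1)$ type terms; on the complement it is governed by the Green function and contributes a term involving the regular part $A(p)$; the quadratic terms $c_1,c_2,c_3$ in the expansion of $G$ enter the finite-order correction, and the hypothesis $\Delta_g\ln h(p_0)+8\pi-2K(p_0)>0$ is exactly what guarantees the sign of that correction is favorable (it is the standard ``$\Delta_g\ln h + 8\pi - 2K > 0$'' condition that makes the bubble energetically admissible). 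After the $r_n$-dependent divergent pieces cancel against $-8\pi\ln\int_\Sigma he^{u_n}\dif\mu_g$ and $8\pi\bar u_n$, what survives is precisely $-4\pi\bigl(A(p)+2\ln h(p)\bigr) - 8\pi\ln\pi - 8\pi + o(1)$, and since $p$ may be any blow-up point we take the max over $x\in\Sigma$ of $A(x)+2\ln h(x)$ to get the stated uniform constant $C_0$; the $o(1)$ is absorbed because, by the monotonicity formula, $J(u(t))$ is non-increasing in $t$, so $J(u(t))\ge\lim_n J(u_n)\ge C_0$ for all $t$.

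I expect the main obstacle to be making the blow-up analysis rigorous \emph{with the extra term $e^{u_n}\partial_t u_n$ present and with $h$ vanishing somewhere}. Controlling $e^{u_n}\partial_t u_n$ during the rescaling requires more than the $L^2(e^{u_n}\dif\mu_g)$ smallness of $\partial_t u_n$; one needs it in a topology strong enough to pass to the limit in the rescaled equation and to run the Brezis--Merle iteration, which likely uses elliptic estimates for $-\Delta u_n$ together with the a priori bounds from Theorem~\ref{global-existence} and an interpolation. And ruling out concentration at a zero of $h$, while morally a capacity argument, has to be done quantitatively enough to also exclude \emph{partial} concentration there; this is where the smooth non-negativity of $h$ (hence the quadratic vanishing $h\le C r^2$) is used in an essential way and is the genuinely new ingredient compared to the positive-$h$ literature.
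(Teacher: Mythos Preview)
Your overall architecture (select a time sequence with vanishing $\int e^{u_n}|\partial_t u_n|^2$, run Brezis--Merle, single bubble, Green-function limit away from the blow-up point, then assemble $J(u_n)$ and use monotonicity) matches the paper's. But there are two genuine problems.

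\textbf{The geometric condition does not belong here.} You write that ``the hypothesis $\Delta_g\ln h(p_0)+8\pi-2K(p_0)>0$ is exactly what guarantees the sign of that correction is favorable''. This is a misattribution. Theorem~\ref{thm:lower-bounds} carries no such hypothesis: the lower bound $J(u(t))\ge C_0$ holds \emph{unconditionally} whenever the flow blows up. The quadratic coefficients $c_1,c_2,c_3$ of the Green expansion and the curvature condition play no role in the lower-bound proof; they enter only later, in the proof of Theorem~\ref{thm:converges}, where one \emph{constructs} an initial datum with $J(u_0)<C_0$ (following Ding--Jost--Li--Wang). Confusing these two directions leads to a circular-looking argument and, more concretely, to inserting a hypothesis you are not entitled to use here.

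\textbf{The neck estimate is missing.} Your two-region split ($B_\delta(p)$ and its complement) cannot produce a \emph{lower} bound on the Dirichlet energy. The bubble profile only controls $\tfrac12\int_{B_{r_nR}}|\nabla u_n|^2$, and the Green function only controls $\tfrac12\int_{\Sigma\setminus B_\delta}|\nabla u_n|^2$; the annulus $B_\delta\setminus B_{r_nR}$ is precisely where the divergent pieces live and where one needs an inequality, not an asymptotic. The paper handles this with a third region: take the radial average $u_n^*(r)$, observe $\int_{\mathrm{annulus}}|\nabla u_n|^2\ge\int_{\mathrm{annulus}}|\nabla u_n^*|^2$, and then compare $u_n^*$ to the harmonic function $w_n$ on the annulus with the same boundary data, yielding
\[
\tfrac12\int_{B_\delta\setminus B_{r_nR}}|\nabla u_n|^2\dif\mu_g\ \ge\ \frac{\pi\bigl(u_n^*(\delta)-u_n^*(r_nR)\bigr)^2}{\ln\delta-\ln(r_nR)}.
\]
Expanding this quotient (using the limits of $u_n^*(r_nR)+2\ln r_n$ and $u_n^*(\delta)-\bar u_n$) is exactly what produces the $-8\pi\bar u_n$ term that cancels against $8\pi\bar u_n$ in $J(u_n)$, together with the nonnegative quadratic remainder that justifies ``$\ge$'' rather than ``$=\,\cdots+o(1)$''. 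Without this Dirichlet-principle step on the neck you have no mechanism to turn the matched asymptotics into a bona fide inequality.

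Two smaller remarks. Your perturbed equation has a sign error: from $e^u\partial_t u=\Delta_g u+8\pi(\cdots)$ one gets $-\Delta_g u_n = 8\pi(\cdots)-e^{u_n}\partial_t u_n$, not $+e^{u_n}\partial_t u_n$. And for ruling out $h(p)=0$, the paper's route is cleaner than your capacity argument: rescaling around the maximum gives a limit $-\Delta\tilde u_\infty=V(x_0)e^{\phi(0)}e^{\tilde u_\infty}$ on $\mathbb{R}^2$ with finite mass, and Chen--Li forces $\int V(x_0)e^{\phi(0)}e^{\tilde u_\infty}=8\pi$, hence $V(x_0)>0$ and so $h(x_0)>0$.
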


Last, by imposing certain geometric condition, we get functions whose value under $J$ is strictly less than $C_0$. Consequently, when the flow starts with these functions, the previous $u_n$ will converges in $H^2(\Sigma)$. Moreover, it follows from the {\L}ojasiewicz-Simon  gradient inequality that the convergence of the flow is actually global in time.

\begin{theorem}[Global convergence]\label{thm:converges}
There exists an initial data $u_0\in C^{\infty}(\Sigma)$ such that $u(t)$ converges in $H^2(\Sigma)$ to a smooth solution of \eqref{eq:kw8pi} provided that 
\begin{align}\label{condition-h}
\begin{split}
    &\Delta_{g} h(p_0)+2(b_1(p_0)k_1(p_0)+b_2(p_0)k_2(p_0))\\
    &>-\left(8\pi+b_1^2(p_0)+b_2^2(p_0)-2K(p_0)\right)h(p_0),
\end{split}
\end{align}
where $K$ is the Gaussian curvature of $\Sigma$, $\nabla_{g} h(p_0)=(k_1(p_0),k_2(p_0))$ in the normal coordinate system, $p_0$ is the maximum point of the function $q\mapsto A(q)+2\ln h(q)$.
\end{theorem}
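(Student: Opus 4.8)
\emph{Proof proposal.} The plan is to exploit the alternative in Theorem~\ref{thm:lower-bounds}: we shall choose the initial datum so that the energy $J$ starts strictly below the threshold $C_0$, which by the monotonicity of $J$ along the flow excludes singularity formation at infinity; the uniform $H^2$-bound this yields on suitable time slices gives subsequential convergence, and a {\L}ojasiewicz--Simon argument then upgrades this to convergence of the whole trajectory.

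\emph{Step 1 (the energy dichotomy).} By Theorem~\ref{global-existence} the flow exists globally; differentiating $J$ along \eqref{eq:mean-field-flow} and using the equation gives the dissipation identity $\frac{\dif}{\dif t}J(u(t))=-\int_\Sigma e^{u}\left|\partial_t u\right|^2\dif\mu_{g}\le0$, so $J(u(t))$ is non-increasing. As in the discussion preceding Theorem~\ref{thm:lower-bounds}, one extracts $t_n\to\infty$ with $\int_\Sigma e^{u_n}\left|\partial_t u_n\right|^2\dif\mu_{g}\to0$, where $u_n\coloneqq u(\cdot,t_n)$. If $\set{\norm{u_n}_{H^2(\Sigma)}}$ is unbounded the flow develops a singularity at infinity, whence $J(u(t))\ge C_0$ for all $t$ by Theorem~\ref{thm:lower-bounds}; but once we have produced $u_0$ with $J(u_0)<C_0$, monotonicity forces $J(u(t))\le J(u_0)<C_0$, a contradiction. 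Hence $\set{\norm{u_n}_{H^2(\Sigma)}}$ is bounded, so $u_n\to u_\infty$ in $C^0(\Sigma)$ along a subsequence; in particular $\int_\Sigma he^{u_n}\dif\mu_{g}\ge e^{-\sup_n\norm{u_n}_\infty}\int_\Sigma h\dif\mu_{g}>0$, and feeding $\int_\Sigma e^{u_n}\left|\partial_t u_n\right|\dif\mu_{g}\to0$ (Cauchy--Schwarz) into the time-slice equation and bootstrapping with elliptic regularity shows $u_n\to u_\infty$ in $H^2(\Sigma)$ with $u_\infty$ a smooth solution of \eqref{eq:kw8pi}.

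\emph{Step 2 (the test function; the crux).} It remains to build $u_0\in C^\infty(\Sigma)$ with $J(u_0)<C_0$. Since $h$ can vanish only away from a maximum point of $q\mapsto A(q)+2\ln h(q)$, we have $h(p_0)>0$, and near $p_0$ the analysis is that of the strictly positive case. In geodesic normal coordinates centred at $p_0$ I would take a bubble glued to the Green function,
\begin{align*}
u_0=\phi_\varepsilon(x)=
\begin{cases}
-2\ln\!\left(\varepsilon^2+r^2\right)+A(p_0)+P(x),& r\le\delta,\\
G(x,p_0),& r\ge2\delta,
\end{cases}
\end{align*}
with $P(x)\coloneqq b_1x_1+b_2x_2+c_1x_1^2+2c_2x_1x_2+c_3x_2^2$ the polynomial part of the regular expansion of $G(\cdot,p_0)$, and a smooth interpolation on $\delta\le r\le2\delta$; keeping $P$ is what makes the inner and outer pieces agree up to $O(r^3)$. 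Expanding $J(\phi_\varepsilon)$ as $\varepsilon\to0$, the Dirichlet term yields the universal $-16\pi\ln\varepsilon$ together with terms involving $A(p_0)$ and $b_i,c_i$, the linear term $8\pi\int_\Sigma\phi_\varepsilon\dif\mu_{g}$ is handled via the normalization of $G$, and the concentration integral $\int_\Sigma he^{\phi_\varepsilon}\dif\mu_{g}$ is computed by inserting the second-order Taylor expansions of $h$ and of the Riemannian area element at $p_0$ (and one checks that the $\varepsilon^2\ln\varepsilon$ terms cancel). This produces an expansion of the shape
\begin{align*}
J(\phi_\varepsilon)=C_0-c\,\varepsilon^2\Big[\Delta_{g}h(p_0)+2\big(b_1k_1+b_2k_2\big)+\big(8\pi+b_1^2+b_2^2-2K(p_0)\big)h(p_0)\Big]+o(\varepsilon^2)
\end{align*}
with $c>0$, so \eqref{condition-h} forces $J(\phi_\varepsilon)<C_0$ for all small $\varepsilon$; fixing such an $\varepsilon$ and smoothing the glued function gives the desired $u_0$.

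\emph{Step 3 (convergence of the whole flow).} Since $J$ is real-analytic on $H^2(\Sigma)$ and $u_\infty$ solves \eqref{eq:kw8pi}, the {\L}ojasiewicz--Simon gradient inequality holds near $u_\infty$: there exist $\theta\in(0,\tfrac12]$, $C>0$ and an $H^2$-neighbourhood of $u_\infty$ on which $\left|J(u)-J(u_\infty)\right|^{1-\theta}\le C\norm{J'(u)}$. As $u_n\to u_\infty$ and the energy dissipated after $t_n$ tends to $0$, the trajectory eventually enters this neighbourhood and, by monotonicity of $J$, cannot leave it; the standard Simon argument combining this inequality with the dissipation identity then shows $u(t)\to u_\infty$ in $H^2(\Sigma)$ as $t\to\infty$.

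\emph{Main obstacle.} The delicate part is Step~2: one must push the expansion of $J(\phi_\varepsilon)$ to order $\varepsilon^2$, which demands a careful accounting of the curvature corrections in the Green function (the coefficients $b_i,c_i$), of the Riemannian area element in normal coordinates, of the second-order Taylor data of $h$ at $p_0$, and of the matching annulus $\delta\le r\le2\delta$, all arranged so that the coefficient of $\varepsilon^2$ is exactly the combination appearing in \eqref{condition-h}. That $h$ may vanish elsewhere plays no role in this local computation, but it is precisely what makes the non-degeneracy $\int_\Sigma he^{u_\infty}\dif\mu_{g}>0$ of the limit equation in Step~1 something to be verified rather than assumed.
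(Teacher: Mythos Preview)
Your overall architecture coincides with the paper's: use Theorem~\ref{thm:lower-bounds} to set up a dichotomy, import the Ding--Jost--Li--Wang test function to get $J(u_0)<C_0$ and thereby exclude blowup, extract an $H^2$-convergent subsequence $u_{n_k}\to u_\infty$, and then invoke a {\L}ojasiewicz--Simon argument for full-time convergence. Steps~1 and~2 are fine and match the paper (which simply cites \cite{DinJosLiWan97} for your Step~2).

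The genuine gap is in Step~3. Your sentence ``the trajectory eventually enters this neighbourhood and, by monotonicity of $J$, cannot leave it'' is not correct: monotonicity of $J$ alone does not trap the flow in an $H^2$-ball. The trapping has to come from the {\L}ojasiewicz inequality itself, by bounding the arc-length $\int\norm{\partial_t u}\dif t$ while inside; but here the dissipation is \emph{weighted}, $-\frac{\dif}{\dif t}J=\int_\Sigma e^{u}\abs{\partial_t u}^2\dif\mu_g$, and the gradient is $\mathcal{M}(u)=e^{u}\partial_t u$. To compare $\int e^{u}\abs{\partial_t u}^2$ with $\norm{\mathcal{M}(u)}_{L^2}\cdot\norm{\partial_t u}_{L^2}$ you need two-sided $L^\infty$ control of $u(t)$, and to convert the resulting $L^2$-arc-length bound into an $H^2$-trapping statement you need more (you only bound $\norm{u(s_k)-u(t_{n_k})}_{L^2}$, while the exit is from an $H^2$-ball). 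None of this is automatic from subsequential convergence of $u_{n_k}$; you have established bounds only along the special times $t_n$.

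The paper spends most of its proof precisely on this point. Before the {\L}ojasiewicz step it proves, for \emph{all} $t$, that $\norm{u(t)^+}_{L^\infty}\le C$ (maximum principle, using the subsequential $L^\infty$ bound at the $t_n$), then $\norm{u(t)}_{H^1}\le C$, then a Brendle-type continuity argument to upgrade $\int e^{u(t_n)}\abs{\partial_t u(t_n)}^2\to0$ to the full limit $\lim_{t\to\infty}\int e^{u(t)}\abs{\partial_t u(t)}^2=0$, and finally $\norm{u(t)}_{H^2}\le C$. With these uniform bounds in hand the weight $e^{u(t)}$ is harmless, the estimate $\norm{u(t)-u_\infty}_{H^2}\le C\big(\norm{\partial_t u(t)}_{L^2}+\norm{u(t)-u_\infty}_{L^2}\big)$ follows from the flow equation, and the Jendoubi version of the Simon argument (in an $L^2$-neighbourhood) closes. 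Your proposal is missing exactly this block; without it the ``standard Simon argument'' does not go through in the weighted setting.
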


\begin{rem}
As pointed by Ding, Li, Jost and Wang in \cite[Remark 1.1]{DinJosLiWan97}, the inequality \eqref{condition-h} is implied by the following one:
\begin{align*}
    \Delta_{g}\ln{h}(p_0)+8\pi-2K(p_0)>0
\end{align*}
where $p_0$ is the maximum point of the function $q\mapsto A(q)+2\ln h(q)$. 
\end{rem}

\begin{rem}For $\rho\in(0,8\pi)$ and any initial data $u_0\in C^{\infty}\left(\Sigma\right)$, by using a similarly argument, the 
\begin{align*}
    \dfrac{\partial e^{u}}{\partial t}=\Delta_{g}u+\rho\left(\dfrac{he^{u}}{\int_{\Sigma}he^{u}\dif\mu_{g}}-1\right),\quad u(\cdot,0)=u_0
\end{align*}
admits a unique global smooth solution which converges to a solution to 
\begin{align*}
    \Delta_{g}u_{\infty}+\rho\left(\dfrac{he^{u_{\infty}}}{\int_{\Sigma}he^{u_{\infty}}\dif\mu_{g}}-1\right)=0.
\end{align*}
\end{rem}

The remaining part of this paper will be organized as follows. In \autoref{sec:existence}, we prove the global existence of the flow \eqref{eq:mean-field-flow}. In \autoref{sec:blowup}, we prove the number of the singularities is at most one. In \autoref{sec:lower}, we show the lower bound of $J$ along the flow if the singularity occurs. In the last \autoref{sec:convergence}, we prove the global convergence of the flow.

\section{Global existence}
\label{sec:existence}
 The aim of this section is to prove the global existence of the mean field flow \eqref{eq:mean-field-flow}, i.e.  \autoref{global-existence}. 

\begin{proof}[Proof of  \autoref{global-existence}]First, we assume  $u_0\in C^{\infty}\left(\Sigma\right)$. Since the flow is parabolic, the short time existence of \eqref{eq:mean-field-flow} follows from the standard method (e.g. \cite{Fri64partial}). Thus, there exists $T>0$ such that $u\in C^{\infty}(\Sigma\times[0,T])$ is a solution of \eqref{eq:mean-field-flow}.

Along the flow \eqref{eq:mean-field-flow}, it is easy to see
\begin{align}\label{eq:energy}
\dfrac{\dif}{\dif t}\int_{\Sigma}e^{u(t)}\dif\mu_{g}=0
\end{align}
and 
\begin{align}\label{eq:functional}
\dfrac{\dif}{\dif t}J(u(t))=-\int_{\Sigma}e^{u(t)}\abs{\dfrac{\partial u(t)}{\partial t}}^2\dif\mu_{g}.
\end{align}
According to \eqref{eq:functional} and \eqref{eq:energy}, we get
\begin{align}\label{eq:functional-1}
\int_{\Sigma}\left(\dfrac12\abs{\nabla_{g}u(t)}_{g}^2+8\pi u(t)\right)\dif\mu_{g}\leq J(u_0)+8\pi\ln\max_{\Sigma}h+8\pi\ln\int_{\Sigma}e^{u_0}\dif\mu_{g}.
\end{align}
Recall the Trudinger-Moser inequality (cf. \cite[Theorem 1.7]{Fontana93})
\begin{align}\label{eq:TM}
    \ln\int_{\Sigma}e^{u}\dif\mu_{g}\leq\dfrac{1}{16\pi}\int_{\Sigma}\abs{\nabla_{g}u}^2\dif\mu_{g}+\fint_{\Sigma}u\dif\mu_{g}+c,\quad\forall u\in H^1\left(\Sigma\right),
\end{align}
where $c$ is a constant depending only on the Riemann surface $\left(\Sigma,g\right)$. As an immediately consequence of \eqref{eq:TM},
\begin{align}\label{eq:functional-2}
   J(u(t))\geq 8\pi\ln\int_{\Sigma}e^{u(t)}\dif\mu_{g}-8\pi c-8\pi\ln\int_{\Sigma}he^{u(t)}\dif\mu_{g} 
\end{align}
and \eqref{eq:energy} imply that
\begin{align}\label{eq:heu}
    0<C^{-1}\exp\left(-C\norm{u_0}_{H^1\left(\Sigma\right)}^2\right)\leq\int_{\Sigma}he^{u(t)}\dif\mu_{g}\leq C\exp\left(C\norm{u_0}_{H^1\left(\Sigma\right)}^2\right).
\end{align}
Together with
\begin{align*}
\begin{split}
    \dfrac{\dif}{\dif t}\int_{\Sigma}e^{pu(t)}\dif\mu_{g}
    &=p\int_{\Sigma}e^{(p-1)u(t)}\left(\Delta_{g}u(t)+8\pi\dfrac{he^u(t)}{\int_{\Sigma}he^u(t)}-8\pi\right)\dif\mu_{g}\\
    &=-p(p-1)\int_{\Sigma}e^{(p-1)u(t)}\abs{\nabla_g{u(t)}}_g^2\dif\mu_{g}+8p\pi\left(\dfrac{\int_{\Sigma}he^{pu(t)}\dif\mu_{g}}{\int_{\Sigma}he^{u(t)}\dif\mu_{g}}-\int_{\Sigma}e^{(p-1)u(t)}\dif\mu_{g}\right),
    \end{split}
\end{align*}
we have
\begin{align*}
    \dfrac{\dif}{\dif t}\int_{\Sigma}e^{pu(t)}\dif\mu_{g}\leq Cp\exp\left(C\norm{u_0}_{H^1\left(\Sigma\right)}^2\right)\int_{\Sigma}e^{pu(t)}\dif\mu_{g}.
\end{align*}
Thus, 
\begin{align}\label{exponential-integral-bound}
    \int_{\Sigma}e^{pu(t)}\dif\mu_{g}\leq \exp\left[Cp\exp\left(C\norm{u_0}_{H^1\left(\Sigma\right)}^2\right)t\right]\int_{\Sigma}e^{pu_0}\dif\mu_{g}, \quad \forall p\geq1.
\end{align}

In order to get the global existence of solution when $u_0\in H^1(\Sigma)$, it is necessary to derive several a priori estimates \eqref{eq:a-priori}. To do this, we split three steps. 
\begin{description}
\item[Step 1]\begin{quotation}
  $\norm{u(t)}_{H^1\left(\Sigma\right)}\leq C\left(T,\norm{u_0}_{H^1\left(\Sigma\right)}\right)$ for any $t\in[0,T]$.  
\end{quotation} 

Set
\begin{align*}
    A(t)=\set{x\in\Sigma: e^{u(x,t)}\geq\dfrac12\int_{\Sigma}e^{u_0}\dif\mu_{g}.}
\end{align*}
According to \eqref{eq:energy} and \eqref{exponential-integral-bound}, we have
\begin{align*}
    \int_{\Sigma}e^{u_0}\dif\mu_{g}=\int_{\Sigma}e^{u(t)}\dif\mu_{g}=\int_{\Sigma\setminus A(t)}e^{u(t)}\dif\mu_{g}+\int_{A(t)}e^{u(t)}\dif\mu_{g}\leq \dfrac12\int_{\Sigma}e^{u_0}\dif\mu_{g}+C\left(T,\norm{u_0}_{H^1\left(\Sigma\right)}\right)\abs{A(t)}_{g}^{1/2},
\end{align*}
where $\abs{A(t)}_{g}$ stands for the area of $A(t)$. This gives
\begin{align*}
    \abs{A(t)}_{g}\geq C\left(T,\norm{u_0}_{H^1\left(\Sigma\right)}\right)^{-1}>0,\quad\abs{\int_{A(t)}u(t)
    \dif\mu_{g}}\leq C\left(T,\norm{u_0}_{H^1\left(\Sigma\right)}\right)
\end{align*}
and
\begin{align*}
    \abs{\bar u(t)}\coloneqq&\abs{\int_{\Sigma}u(t)
    \dif\mu_{g}}\\
    \leq&\abs{\int_{\Sigma\setminus A(t)}u(t)
    \dif\mu_{g}}+\abs{\int_{A(t)}u(t)
    \dif\mu_{g}}\\
    \leq&\abs{\Sigma\setminus A(t)}^{1/2}\left(\int_{\Sigma\setminus A(t)}u(t)^2
    \dif\mu_{g}\right)^{1/2}+C\left(T,\norm{u_0}_{H^1\left(\Sigma\right)}\right)\\
    \leq&\sqrt{1-C\left(T,\norm{u_0}_{H^1\left(\Sigma\right)}\right)^{-1}}\norm{u(t)}_{L^2\left(\Sigma\right)}+C\left(T,\norm{u_0}_{H^1\left(\Sigma\right)}\right).
\end{align*}
Then, by Poincar\'e inequality, we get
\begin{align*}
    \norm{u(t)}_{L^2\left(\Sigma\right)}\leq c\norm{\nabla_{g}u(t)}_{L^2\left(\Sigma\right)}+\abs{\bar u(t)}\leq c\norm{\nabla_{g}u(t)}_{L^2\left(\Sigma\right)}+\sqrt{1-C\left(T,\norm{u_0}_{H^1\left(\Sigma\right)}\right)^{-1}}\norm{u(t)}_{L^2\left(\Sigma\right)}+C\left(T,\norm{u_0}_{H^1\left(\Sigma\right)}\right),
\end{align*}
which implies the following $L^2$-estimate
\begin{align}\label{L2}
    \norm{u(t)}_{L^2\left(\Sigma\right)}\leq C\left(T,\norm{u_0}_{H^1\left(\Sigma\right)}\right)\left(1+\norm{\nabla_{g}u(t)}_{L^2\left(\Sigma\right)}\right).
\end{align}
Now, applying Young's inequality to \eqref{eq:functional-1}, we obtain
\begin{align*}
    C\geq\int_{\Sigma}\abs{\nabla_{g}u(t)}_{g}^2\dif\mu_{g}-\varepsilon\int_{\Sigma}u(t)^2\dif\mu_{g}-C_{\varepsilon}.
\end{align*}
Choosing small $\varepsilon$ such that
\begin{align*}
    \int_{\Sigma}\abs{\nabla_{g}u(t)}_{g}^2\dif\mu_{g}\leq C\left(T,\norm{u_0}_{H^1\left(\Sigma\right)}\right).
\end{align*}
Thus, together with \eqref{L2}, we can conclude that
\begin{align}\label{eq:H_1}
    \norm{u(t)}_{H^1\left(\Sigma\right)}\leq C\left(T,\norm{u_0}_{H^1\left(\Sigma\right)}\right).
\end{align}

\item[Step 2]
\begin{quotation}
    $\norm{u(t)}_{H^2\left(\Sigma\right)}+\left(\int_0^T\norm{\frac{\partial u(t)}{\partial t}}_{H^1\left(\Sigma\right)}^2\dif t\right)^{1/2}\leq C\left(T,\norm{u_0}_{H^2\left(\Sigma\right)}\right)$ for any $t\in[0,T]$.
\end{quotation}

Set $w(t)=e^{\frac{u(t)}{2}}\frac{\partial u(t)}{\partial t}$. Then
\begin{align*}
    \dfrac12\dfrac{\dif}{\dif t}\int_{\Sigma}\abs{\Delta_{g}u(t)}^2\dif\mu_{g}=&\int_{\Sigma}\Delta_{g}u(t)\Delta_{g}\dfrac{\partial u(t)}{\partial t}\dif\mu_{g}\\
    =&\int_{\Sigma}\left(e^{\frac{u(t)}{2}}w(t)-8\pi\left(\dfrac{he^{u(t)}}{\int_{\Sigma}he^{u(t)\dif\mu_{g}}}-1\right)\right)\Delta_{g}\left(e^{-\frac{u(t)}{2}}w(t)\right)\dif\mu_{g}\\
    =&-\int_{\Sigma}\abs{\nabla_{g}w(t)}_{g}^2\dif\mu_{g}+\dfrac14\int_{\Sigma}w(t)^2\abs{\nabla_{g}u(t)}^2\dif\mu_{g}\\
    &+\dfrac{8\pi}{\int_{\Sigma}he^{u(t)}\dif\mu_{g}}\int_{\Sigma}\hin{e^{\frac{u(t)}{2}}\left(\nabla_{g}h+h\nabla_{g}u(t)\right)}{\nabla_{g}w(t)-\dfrac12w(t)\nabla_{g}u(t)}_{g}\dif\mu_{g}.
\end{align*}
According to \eqref{eq:heu} and \eqref{eq:H_1}, we know that
\begin{align*}
    \norm{u(t)}_{H^1\left(\Sigma\right)}+\dfrac{1}{\int_{\Sigma}he^{u(t)}\dif\mu_{g}}\leq C\left(T,\norm{u_0}_{H^1\left(\Sigma\right)}\right).
\end{align*}
Therefore, Young's inequality implies that
\begin{align*}
    \dfrac{\dif}{\dif t}\int_{\Sigma}\abs{\Delta_{g}u(t)}^2\dif\mu_{g}\leq&-\int_{\Sigma}\abs{\nabla_{g}w(t)}^2_{g}\dif\mu_{g}+\int_{\Sigma}w(t)^2\abs{\nabla_{g}u(t)}^2\dif\mu_{g}+C\left(T,\norm{u_0}_{H^1\left(\Sigma\right)}\right)\left(1+\norm{\nabla_{g}u(t)}_{L^4\left(\Sigma\right)}^2\right).
\end{align*}
Since for all $f\in H^1\left(\Sigma\right)$, we have the following interpolation inequality 
\begin{align}\label{eq:interpolation}
    \norm{f}^2_{L^4\left(\Sigma\right)}\leq c\norm{f}_{L^2\left(\Sigma\right)}\norm{f}_{H^1\left(\Sigma\right)}.
\end{align}
We estimate
\begin{align*}
    \int_{\Sigma}w(t)^2\abs{\nabla_{g}u(t)}^2\dif\mu_{g}\leq& c\norm{w(t)}^2_{L^4\left(\Sigma\right)}\norm{\nabla_{g}u(t)}_{L^4\left(\Sigma\right)}^2\\
    \leq& c\norm{w(t)}_{L^2\left(\Sigma\right)}\norm{w(t)}_{H^1\left(\Sigma\right)}\norm{u(t)}_{H^1\left(\Sigma\right)}\norm{u(t)}_{H^2\left(\Sigma\right)}\\
    \leq&C\left(T,\norm{u_0}_{H^1\left(\Sigma\right)}\right)\norm{w(t)}_{L^2\left(\Sigma\right)}\norm{w(t)}_{H^1\left(\Sigma\right)}\norm{u(t)}_{H^2\left(\Sigma\right)}
\end{align*}
and
\begin{align*}
    \norm{\nabla_{g}u(t)}_{L^4\left(\Sigma\right)}^2\leq c\norm{u}_{H^1\left(\Sigma\right)}\norm{u}_{H^2\left(\Sigma\right)}\leq C\left(T,\norm{u_0}_{H^1\left(\Sigma\right)}\right)\norm{u}_{H^2\left(\Sigma\right)}.
\end{align*}
Hence
\begin{align*}
    \dfrac{\dif}{\dif t}\int_{\Sigma}\abs{\Delta_{g}u(t)}^2\dif\mu_{g}\leq&-\dfrac12\int_{\Sigma}\abs{\nabla_{g}w(t)}_{g}^2\dif\mu_{g}+\dfrac12\int_{\Sigma}w(t)^2\dif\mu_{g}+C\left(T,\norm{u_0}_{H^1\left(\Sigma\right)}\right)\norm{w(t)}_{L^2\left(\Sigma\right)}^2\norm{u(t)}_{H^2\left(\Sigma\right)}^2\\
    &\quad+C\left(T,\norm{u_0}_{H^1\left(\Sigma\right)}\right)\left(1+\norm{u}_{H^2\left(\Sigma\right)}\right)\\
    \leq&-\dfrac14\int_{\Sigma}\abs{\nabla_{g}w(t)}_{g}^2\dif\mu_{g}+C\left(T,\norm{u_0}_{H^1\left(\Sigma\right)}\right)\left(1+\norm{w(t)}^2_{L^2\left(\Sigma\right)}\right)\left(1+\norm{\Delta_{g}u(t)}^2_{L^2\left(\Sigma\right)}\right).
\end{align*}
Thus
\begin{align*}
    \dfrac{\dif}{\dif t}\ln\left(1+\norm{\Delta_{g}u(t)}^2_{L^2\left(\Sigma\right)}+\int_0^t\int_{\Sigma}\abs{\nabla_{g}w(\tau)}_{g}^2\dif\mu_{g}\dif\tau\right)\leq C\left(T,\norm{u_0}_{H^1\left(\Sigma\right)}\right)\left(1+\norm{w(t)}^2_{L^2\left(\Sigma\right)}\right).
\end{align*}
Together with $u_0\in H^2(\Sigma)$, we obtain
\begin{align*}
    \ln\left(1+\norm{\Delta_{g}u(t)}^2_{L^2\left(\Sigma\right)}+\int_0^T\int_{\Sigma}\abs{\nabla_{g}w(\tau)}_{g}^2\dif\mu_{g}\dif\tau\right)\leq C\left(T,\norm{u_0}_{H^2\left(\Sigma\right)}\right)+C\left(T,\norm{u_0}_{H^1\left(\Sigma\right)}\right)\int_0^T\left(1+\norm{w(t)}^2_{L^2\left(\Sigma\right)}\right)\dif t.
\end{align*}
By \eqref{eq:functional}, we know that
\begin{align*}
    \int_0^T\norm{w(t)}^2_{L^2\left(\Sigma\right)}\dif t=\int_{0}^T\int_{\Sigma}e^{u(t)}\abs{\dfrac{\partial u(t)}{\partial t}}^2\dif\mu_{g}\dif t=J\left(u(0)-J\left(u(T)\right)\right)\leq C.
\end{align*}
Consequently, by using Sobolev embedding, we conclude
\begin{align*}
    \norm{u(t)}_{H^2\left(\Sigma\right)}+\left(\int_0^T\norm{\dfrac{\partial u(t)}{\partial t}}_{H^1\left(\Sigma\right)}^2\dif t\right)^{1/2}\leq C\left(T,\norm{u_0}_{H^2\left(\Sigma\right)}\right).
\end{align*}

\item[Step 3]
\begin{quotation}
    $\left(\int_{0}^T\norm{\frac{\partial^2u}{\partial t^2}}_{H^{-1}\left(\Sigma\right)}^2\dif t\right)^{1/2}\leq C\left(T,\norm{u_0}_{H^2\left(\Sigma\right)}\right)$ for any $t\in[0,T]$.
\end{quotation}

Differential the equation \eqref{eq:mean-field-flow} with respect to $t$, we get
\begin{align*}
    e^u\ddot{u}+e^u\dot{u}^2=\Delta\dot{u}+8\pi\left(\frac{he^u\dot{u}}{\int_{\Sigma}he^u\dif\mu_{g}}-\frac{he^u\int_{\Sigma}he^u\dot{u}\dif\mu_{g}}{\left(\int_{\Sigma}he^u\dif\mu_{g}\right)^2}\right)
\end{align*}
where $\ddot u=\frac{\partial^2u}{\partial t^2}$ and $\dot u=\frac{\partial u}{\partial t}$.
Then for all $\psi\in H^1(\Sigma)$ with $\|\psi\|_{H^1(\Sigma)}\leq1$, we have
\begin{align*}
    \int_\Sigma\ddot{u}\psi\dif\mu_{g}
    =&\int_\Sigma e^{-u}\left(\Delta\dot{u}+8\pi\left(\frac{he^u\dot{u}}{\int_{\Sigma}he^u\dif\mu_{g}}-\frac{he^u\int_{\Sigma}he^u\dot{u}\dif\mu_{g}}{\left(\int_{\Sigma}he^u\dif\mu_{g}\right)^2}\right)\right)\psi\dif\mu_{g}-\int_\Sigma|\dot{u}|^2\psi\mu_{g}\\
    =&-\int_{\Sigma}\hin{\nabla_{g}\dot u}{\nabla_{g}\dot u\psi+\nabla_{g}\psi}_{g}e^{-u}\dif\mu_{g}+\int_\Sigma 8\pi\left(\dfrac{h\dot{u}}{\int_{\Sigma}he^u\dif\mu_{g}}-\dfrac{h\int_{\Sigma}he^u\dot{u}\dif\mu_{g}}{\left(\int_{\Sigma}he^u\dif\mu_{g}\right)^2}\right)\psi\dif\mu_{g}-\int_\Sigma|\dot{u}|^2\psi\mu_{g}\\
    \leq&C\left(T,\norm{u_0}_{H^2\left(\Sigma\right)}\right) \|\dot{u}\|_{H^1(\Sigma)}.
\end{align*}
Thus $\norm{\ddot u(t)}_{H^{-1}\left(\Sigma\right)}\leq C\left(T,\norm{u_0}_{H^2\left(\Sigma\right)}\right) \norm{\dot{u(t)}}_{H^1(\Sigma)}$ which implies the desired estimate.
\end{description}

Since we have the following embedding (cf. \cite[page 304, Theorem 2]{Eva10partial} and \cite[page 305, Theorem 3]{Eva10partial})
\begin{align*}
    C\left(0,T;H^1\left(\Sigma\right)\right)\subset H^1\left(0,T;H^1\left(\Sigma\right)\right),\quad C\left(0,T;L^2\left(\Sigma\right)\right)\subset L^2\left(0,T;H^1\left(\Sigma\right)\right)\cap H^1\left(0,T;H^{-1}\left(\Sigma\right)\right),
\end{align*}
we get 
\begin{align*}
u\in& \cap_{0<T<\infty}\left(L^{\infty}\left(0,T;H^2\left(\Sigma\right)\right)\cap H^1\left(0,T;H^1\left(\Sigma\right)\right)\cap H^2\left(0,T;H^{-1}\left(\Sigma\right)\right)\cap C\left(0,T;H^{1}\left(\Sigma\right)\right)\cap C^1\left(0,T;L^{2}\left(\Sigma\right)\right)\right).
\end{align*}
By using the parabolic Sobolev embedding theorems (cf. \cite[pages 368-369]{Cha89heat}) together with the interpolation inequality \eqref{eq:interpolation}, we get
\begin{align*}
    u\in\cap_{0< T<\infty} W^{2,1}_4\left(\Sigma\times[0,T]\right)\subset \cap_{0< T<\infty}C^{\alpha,\alpha/2}\left(\Sigma\times[0,T]\right),\quad\forall 0<\alpha<1.
\end{align*}
Here $W^{2,1}_p\left(\Sigma\times[0,T]\right)=L^p\left(0;T;W^{2,p}\left(\Sigma\right)\right)\cap W^{1,p}\left(0,T; L^p\left(\Sigma\right)\right)$ stands for the usual parabolic Sobolev space.

Then the standard regularity theory for parabolic equation gives \begin{align*}
    \norm{u(t)}_{C^{2+k+\alpha,(2+k+\alpha)/2}\left(\Sigma\times[0,T]\right)}\leq C\left(T,k,\norm{u_0}_{C^{2+k+\alpha}(\Sigma)}\right)
    \end{align*}
for all integer number $k\geq0$. In particular, we can extend this flow to infinity  and $u$ is smooth in $\Sigma\times(0,\infty)$.

Now assume $u_0\in H^2\left(\Sigma\right)$ and choose a sequence of smooth functions $u_{0,\varepsilon}$ on $\Sigma$ such that $u_{0,\varepsilon}$ converges to $u_0$ in $H^2\left(\Sigma\right)$ as $\varepsilon\to0$. Let $u_{\varepsilon}$ be the unique smooth solution to
\begin{align*}
    \begin{cases}
\dfrac{\partial e^{u_{\varepsilon}}}{\partial t}=\Delta_{g}u_{\varepsilon}+8\pi\left(\dfrac{he^{u_{\varepsilon}}}{\int_{\Sigma}he^{u_{\varepsilon}}\dif\mu_{g}}-1\right),&\Sigma\times(0,\infty),\\
u_{\varepsilon}(\cdot,0)=u_{0,\varepsilon},&\Sigma.
\end{cases}
\end{align*}
The a prior estimates \eqref{eq:a-priori}   gives the following estimates
\begin{align*}
    \esssup_{0\leq t\leq T}\norm{u_{\varepsilon}(t)}_{H^2\left(\Sigma\right)}+\left(\int_0^T\left(\norm{\dfrac{\partial u_{\varepsilon}(t)}{\partial t}}_{H^1\left(\Sigma\right)}^2+\norm{\dfrac{\partial^2 u_{\varepsilon}(t)}{\partial t^2}}_{H^{-1}\left(\Sigma\right)}^2\right)\dif t\right)^{1/2}\leq C\left(T,\norm{u_0}_{H^2\left(\Sigma\right)}\right),\quad\forall 0<T<\infty.
\end{align*}
Thus we obtain a solution $u\in C^{\infty}\left(\Sigma\times(0,\infty)\right)$ to \eqref{eq:mean-field-flow} with
\begin{align*}
u\in \cap_{0<T<\infty}\left(L^{\infty}\left(0,T;H^2\left(\Sigma\right)\right)\cap H^1\left(0,T;H^1\left(\Sigma\right)\right)\cap H^2\left(0,T;H^{-1}\left(\Sigma\right)\right)\right)
    \end{align*}
    and the desired a priori estimates \eqref{eq:a-priori}.

To prove the uniqueness of the solution, we assume that $u$ and $v$ are two solutions to \eqref{eq:mean-field-flow} with initial data $u_0$ and $v_0$ respectively. Denote $w=u-v$. By direct computations, we have
\begin{align}\label{eq:difference}
    a\dfrac{\partial w}{\partial t}=\Delta_{g}w+f-bw
\end{align}
where
\begin{align*}
    a=\int_0^1e^{su+(1-s)v}\dif s,\quad b=\int_0^1e^{su+(1-s)v}\left(s\dfrac{\partial u}{\partial t}+(1-s)\dfrac{\partial v}{\partial t}\right)\dif s=\dfrac{\partial a}{\partial t},\\
    f=8\pi\int_0^1\dfrac{he^{su+(1-s)v}}{\int_{\Sigma}he^{su+(1-s)v}\dif\mu_{g}}w\dif s
       -8\pi\int_0^1\dfrac{he^{su+(1-s)v}\int_{\Sigma}he^{su+(1-s)v}w\dif\mu_{g}}{\left(\int_{\Sigma}he^{su+(1-s)v\dif\mu_{g}}\right)^2}\dif s.
\end{align*}
One can check that there is a constant $C$ depends only on $T, \norm{u_0}_{H^2\left(\Sigma\right)}$ and $\norm{u_0}_{H^2\left(\Sigma\right)}$ such that for all $0\leq t\leq T$
\begin{align*}
    C^{-1}\leq a(t)\leq C, \quad\abs{b(t)}\leq C\left(\abs{\dfrac{\partial u(t)}{\partial t}}+\abs{\dfrac{\partial u(t)}{\partial t}}\right),\\
    \abs{f(t)}\leq C\left(\abs{w(t)}+\norm{w(t)}_{L^2\left(\Sigma\right)}\right),\quad f(t)w(t)\leq C w(t)^2.
\end{align*}
 Then we obtain
 \begin{align*}
     \dfrac{\dif}{\dif t}\int_{\Sigma}a(t)w(t)^2\dif\mu_{g}=&\int_{\Sigma}b(t)w(t)^2\dif\mu_{g}+2\int_{\Sigma}a(t)w(t)\dfrac{\partial w(t)}{\partial t}\dif\mu_{g}\\
     =&-2\int_{\Sigma}\abs{\nabla_{g}w(t)}_{g}^2\dif\mu_{g}+2\int_{\Sigma}f(t)w(t)\dif\mu_{g}-\int_{\Sigma}b(t)w(t)^2\dif\mu_{g}\\
    \leq&-\int_{\Sigma}\abs{\nabla_{g}w(t)}_{g}^2\dif\mu_{g}+C\int_{\Sigma}a(t)w(t)^2\dif\mu_{g}.
 \end{align*}
 Gronwall's inequality implies
 \begin{align}\label{eq:L2-w}
     \int_{\Sigma}w(t)^2\dif\mu_{g}\leq C\left[\int_{\Sigma}a(t)w(t)^2\dif\mu_{g}+\int_0^T\int_{\Sigma}\abs{\nabla_{g}w}_{g}^2\dif\mu_{g}\dif t\right]\leq C\norm{u_0-v_0}_{L^2\left(\Sigma\right)}^2,\quad\forall 0<t<T.
 \end{align}
 The uniqueness then follows from the above inequality and we finish the proof.

\end{proof}

\begin{rem}
One check that the difference of two solutions $u$ and $v$ satisfies
\begin{align*}
    \norm{u-v}_{W^{2,1}_2\left(\Sigma\times[0,T]\right)}\leq C\left(T,\norm{u_0}_{H^2\left(\Sigma\right)},\norm{v_0}_{H^2\left(\Sigma\right)}\right)\norm{u_0-v_0}_{H^2\left(\Sigma\right)}.
\end{align*}
The proof is standard. Roughly speaking, \eqref{eq:difference} implies
\begin{align*}
    \abs{a^{1/2}\dfrac{\partial w}{\partial t}-a^{-1/2}\Delta_{g}}^2=a\abs{\dfrac{\partial w}{\partial t}}^2+a^{-1}\abs{\Delta_{g}w}^2-2\hin{\dfrac{\partial w}{\partial t}}{\Delta_{g}w}_{g}.
\end{align*}
Integration by parts,
\begin{align*}
  \int_{\Sigma} \left(a\abs{\dfrac{\partial w}{\partial t}w}^2+a^{-1}\abs{\Delta_{g}w}^2\right)\dif\mu_{g}+\dfrac{\dif}{\dif t}\int_{\Sigma}\abs{\nabla_{g}w}_{g}^2\dif\mu_{g}\leq C\int_{\Sigma}\abs{w}^2\dif\mu_{g}+C\left(\int_{\Sigma}\abs{b}^4\dif\mu_{g}\right)^{1/2}\left(\int_{\Sigma}\abs{w}^4\dif\mu_{g}\right)^{1/2}.
\end{align*}
Applying the interpolation inequality \eqref{eq:interpolation} and the $L^2$-estimate \eqref{eq:L2-w} of the $w$, we have
\begin{align*}
   \int_0^T\int_{\Sigma} \left(a\abs{\dfrac{\partial w}{\partial t}w}^2+a^{-1}\abs{\Delta_{g}w}^2\right)\dif\mu_{g}\dif t+\max_{0\leq t\leq T}\int_{\Sigma}\abs{\nabla_{g}w(t)}_{g}^2\dif\mu_{g}\leq C\int_{\Sigma}\abs{u_0-v_0}^2\dif\mu_{g}
\end{align*}
where the constant $C$ depends only on $T,\norm{u_0}_{H^2\left(\Sigma\right)}$ and $\norm{v_0}_{H^2\left(\Sigma\right)}$. 

\end{rem}

\section{Blowup analysis}\label{sec:blowup}
In this section, we prove an estimate of a Dirac measure at the blowup points. Consequently, we show the fact that the flow develops at most one blowup point when the time goes to infinity.

According to \eqref{eq:functional} and \eqref{eq:functional-2}, we know that
\begin{align*}
    \int_{0}^{\infty}\int_{\Sigma}e^{u(t)}\abs{\dfrac{\partial u(t)}{\partial t}}^2\dif\mu_{g}\dif t\leq C.
\end{align*}
There is a sequence of positive numbers $\set{t_n}$ such that $n\leq t_n\leq n+1$ and
\begin{align*}
    \lim_{n\to\infty}\int_{\Sigma}e^{u(t_n)}\abs{\dfrac{\partial u(t_n)}{\partial t}}^2\dif\mu_{g}=0.
\end{align*}
Set 
\begin{align}\label{eq:model}
    u_n=u(t_n),\quad V_n=\dfrac{8\pi h}{\int_{\Sigma}he^{u_n}\dif\mu_{g}},\quad\rho=8\pi,\quad f_n=e^{u_n/2}\dfrac{\partial u(t_n)}{\partial t},
\end{align}
then
\begin{align}\label{eq:sequence}
    -\Delta_{g}u_n=V_ne^{u_n}-\rho-f_ne^{u_n/2},\quad\text{in}\ \Sigma,
\end{align}
and $u_n,V_n,\rho,f_n$ are smooth functions on $\Sigma$ satisfying
\begin{align}\label{eq:assumption}
    \rho>0,\quad 0\leq V_n\leq C,\quad\quad\lim_{n\to\infty}\norm{f_n}_{L^2\left(\Sigma\right)}=0.
\end{align}
One can check that
\begin{align}\label{eq:mass}
    \int_{\Sigma}e^{u_n}\dif\mu_{g}\leq C.
\end{align}

We say that a sequence $\set{u_n}$ which satisfies \eqref{eq:sequence} and \eqref{eq:assumption}  is a blowup sequence if $\limsup\limits_{n\to\infty}\max\limits_{\Sigma}u_n=+\infty$. 

\begin{lem}\label{compact sequence}
If $\set{u_n}$ is not a blowup sequence, then $\set{u_n}$ is bounded in $H^{2}\left(\Sigma\right)$.
\end{lem}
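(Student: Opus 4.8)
The plan is to exploit the hypothesis ``not a blowup sequence'' to get a uniform pointwise upper bound on $u_n$, convert this into an $L^2$ bound on the right‑hand side of \eqref{eq:sequence}, apply standard elliptic theory to control $u_n-\bar u_n$ in $H^2$, and finally pin down the mean $\bar u_n$ by integrating the equation.

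\textbf{Step 1 (upper bound and the elliptic estimate).} Since $\limsup_{n\to\infty}\max_\Sigma u_n<+\infty$ and each $u_n$ is smooth, there is a constant $M$ with $u_n\le M$ on $\Sigma$ for every $n$. Then $0\le V_ne^{u_n}\le Ce^{M}$ by \eqref{eq:assumption}, while $\norm{f_ne^{u_n/2}}_{L^2(\Sigma)}\le e^{M/2}\norm{f_n}_{L^2(\Sigma)}\to0$ by \eqref{eq:assumption}; hence the right‑hand side of \eqref{eq:sequence} is bounded in $L^2(\Sigma)$ and $\norm{\Delta_gu_n}_{L^2(\Sigma)}\le C$. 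On the closed surface $\Sigma_g$, the operator $\Delta_g$ is an isomorphism from $\set{w\in H^2(\Sigma):\int_\Sigma w\dif\mu_g=0}$ onto $\set{w\in L^2(\Sigma):\int_\Sigma w\dif\mu_g=0}$, so with $\bar u_n\coloneqq\int_\Sigma u_n\dif\mu_g$ we obtain $\norm{u_n-\bar u_n}_{H^2(\Sigma)}\le C\norm{\Delta_gu_n}_{L^2(\Sigma)}\le C$, and, by the Sobolev embedding $H^2(\Sigma)\hookrightarrow L^\infty(\Sigma)$ (here $\dim\Sigma=2$), also $\norm{u_n-\bar u_n}_{L^\infty(\Sigma)}\le C$.

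\textbf{Step 2 (controlling the mean).} For the upper bound, Jensen's inequality together with $\abs{\Sigma}_g=1$ gives $e^{\bar u_n}\le\int_\Sigma e^{u_n}\dif\mu_g\le C$ by \eqref{eq:mass}. For the lower bound, integrate \eqref{eq:sequence} over $\Sigma$ to get $\rho=\int_\Sigma V_ne^{u_n}\dif\mu_g-\int_\Sigma f_ne^{u_n/2}\dif\mu_g$; by Cauchy--Schwarz and \eqref{eq:mass} the last term tends to $0$, so $\int_\Sigma V_ne^{u_n}\dif\mu_g\ge\rho/2$ for $n$ large, and since $V_n\le C$ this forces $\int_\Sigma e^{u_n}\dif\mu_g\ge\rho/(2C)>0$. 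Combining with $\int_\Sigma e^{u_n}\dif\mu_g=e^{\bar u_n}\int_\Sigma e^{u_n-\bar u_n}\dif\mu_g\le e^{\bar u_n}e^{\norm{u_n-\bar u_n}_{L^\infty(\Sigma)}}\le Ce^{\bar u_n}$ yields $\bar u_n\ge -C$. Hence $\abs{\bar u_n}\le C$, and with the $H^2$ bound on $u_n-\bar u_n$ this gives $\norm{u_n}_{H^2(\Sigma)}\le C$ for all large $n$; enlarging $C$ to absorb the finitely many remaining (smooth, hence bounded) terms finishes the proof.

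\textbf{Main obstacle.} The only non‑routine point is the \emph{lower} bound $\int_\Sigma e^{u_n}\dif\mu_g\ge c>0$, i.e.\ ruling out the ``vanishing'' scenario $\bar u_n\to-\infty$; this is precisely where the sign condition $\rho>0$ from \eqref{eq:assumption} enters, via integration of \eqref{eq:sequence}. Everything else is standard elliptic estimates on a closed surface.
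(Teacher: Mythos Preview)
Your argument is correct, and its overall structure coincides with the paper's: both of you first use the hypothesis to get $u_n^+\in L^\infty(\Sigma)$ uniformly, then bound the right–hand side of \eqref{eq:sequence} in $L^2$, apply elliptic theory to control $u_n-\bar u_n$ in $H^2(\Sigma)$ (and hence in $L^\infty$), bound $\bar u_n$ from above by Jensen and \eqref{eq:mass}, and finally bound $\bar u_n$ from below.

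The one genuine difference is in this last step. The paper argues by contradiction: assuming $\bar u_{n_k}\to-\infty$, it extracts a weak $H^2$ limit $\hat u$ of $u_{n_k}-\bar u_{n_k}$ and shows that $\hat u$ solves $-\Delta_g\hat u=-\rho$ on the closed surface, forcing $\rho=0$ and contradicting \eqref{eq:assumption}. Your route is more direct and quantitative: you integrate \eqref{eq:sequence} over $\Sigma$, kill the $f_n$–term with Cauchy--Schwarz and \eqref{eq:mass}, and read off $\int_\Sigma V_ne^{u_n}\dif\mu_g\ge\rho/2$; combined with $V_n\le C$ and the $L^\infty$ bound on $u_n-\bar u_n$, this yields an explicit lower bound on $e^{\bar u_n}$. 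Your argument is shorter and avoids any compactness step, while the paper's soft argument has the mild advantage of not needing the quantitative $L^\infty$ control on $u_n-\bar u_n$ (only the weak $H^2$ compactness of that sequence). Both uses of $\rho>0$ are essentially the same observation in different clothing.
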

\begin{proof}
By definition, $\set{u_n^{+}}$ is bounded in $L^{\infty}\left(\Sigma\right)$. By the standard elliptic estimates and the normalization $\int_{\Sigma}\dif\mu_{g}=1$, we conclude that $\set{u_n-\bar u_n}$ is bounded in $H^{2}\left(\Sigma\right)$, where $\bar{u}_n\coloneqq\bar{u}(t_n)=\int_{\Sigma}u(t_n)\dif\mu_{g}$. By Jensen's inequality, according to \eqref{eq:mass}, we have $\bar u_n\leq C$. It suffices to prove that $\bar u_n\geq-C$. Otherwise, there is a subsequence $\set{u_{n_k}}$ such that $\lim_{n_k\to\infty}\bar u_{n_k}=-\infty$. Notice that
\begin{align*}
-\Delta_{g}\left(u_{n_k}-\bar u_{n_k}\right)=V_{n_k} e^{\bar u_{n_k}}e^{u_{n_k}-\bar u_{n_k}}-\rho-f_{n_k}e^{\bar u_{n_k}/2}e^{\left(u_{n_k}-\bar u_{n_k}\right)/2},\quad\text{in}\ \Sigma.
\end{align*}
We may assume $u_{n_k}-\bar u_{n_k}$ converges weakly to $\hat u$ in $H^2\left(\Sigma\right)$ and strongly in $L^1\left(\Sigma\right)$. Then $\set{e^{p\left(u_{n_k}-\bar u_{n_k}\right)}}$ converges strongly to $e^{p\hat u}$ in $L^1\left(\Sigma\right)$ for each $p>0$. Thus $\hat u$ is a weak solution to
\begin{align*}
-\Delta_{g}\hat u=-\rho.
\end{align*}
It is well know that $\hat u\in C^{\infty}\left(\Sigma\right)$ and $\rho=0$ which is a contradiction. Therefore, $\set{u_n}$ is bounded in $H^2\left(\Sigma\right)$.
\end{proof}

From now on, we assume $\set{u_n}$ is a blowup sequence. Since $\set{V_n e^{u_n}}$ is bounded in $L^1\left(\Sigma\right)$, we may assume $\set{V_n e^{u_{n}}\dif\mu_{g}}$  converges to a nonzero Radon measure $\mu$ on $\Sigma$  in the sense of  measures. By using the method of potential estimates (cf. \cite[Lemma 7.12]{GilTru01}), we get
\begin{align*}
\norm{u_n-\bar u_n}_{W^{1,p}\left(\Sigma\right)}\leq C_p,\quad\forall 1\leq p<2.
\end{align*}
We may assume  $\set{u_n-\bar u_n}$ converges weakly to $G$  in $W^{1,p}\left(\Sigma\right)$ and strongly in $L^p\left(\Sigma\right)$ for every $1<p<2$. Hence $U$ satisfies
\begin{align*}
\begin{cases}
-\Delta_{g}G=\mu-\rho,\quad\text{in}\ \Sigma,\\
\int_{\Sigma}G\dif\mu_{g}=0,
\end{cases}
\end{align*}
in the sense of distribution. 
Define the singular set $S$ of the sequence $\set{u_n}$ as follows
\begin{align*}
S=\set{x\in\Sigma:\mu\left(\set{x}\right)\geq 4\pi.}
\end{align*}
It is easy to check that $S$ is a finite nonempty subset of $\Sigma$.

Recall Brezis-Merle's estimate (\cite[Theorem 1]{BreMer91}).

\begin{lem}[cf. \cite{DinJosLiWan97}]\label{lem:BM}Let $\Omega\subset\Sigma$ be a smooth domain. Assume $u$ is a solution to
\begin{align*}
\begin{cases}
    -\Delta_{g}u=f,&\text{in}\ \Omega,\\
    u=0,&\text{on}\ \partial\Omega,
\end{cases}
\end{align*}
where $f\in L^1\left(\Omega\right)$. For every $0<\delta<4\pi$, there is a constant $C$ depending only on $\delta$ and $\Omega$ such that
\begin{align*}
    \int_{\Omega}\exp\left(\dfrac{\left(4\pi-\delta\right)\abs{u}}{\norm{f}_{L^1\left(\Omega\right)}}\right)\dif\mu_{g}\leq C.
\end{align*}
\end{lem}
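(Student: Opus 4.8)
The plan is to run the argument of Brezis and Merle \cite{BreMer91}: represent $u$ through the Dirichlet Green function of $\Omega$ and apply Jensen's inequality against the probability measure $\abs{f}\dif\mu_{g}/\norm{f}_{L^1(\Omega)}$. By the linearity of the equation we may rescale so that $\norm{f}_{L^1(\Omega)}=1$ and produce a bound for $\int_{\Omega}\exp\big((4\pi-\delta)\abs{u}\big)\dif\mu_{g}$. By density of $C^{\infty}(\overline\Omega)$ in $L^{1}(\Omega)$ and Fatou's lemma (the corresponding solutions converge in $W^{1,p}(\Omega)$ for every $1\le p<2$, hence a.e.\ along a subsequence), it suffices to treat smooth $f$, so that $u$ is the classical solution
\[
u(x)=\int_{\Omega}G_{\Omega}(x,y)f(y)\dif\mu_{g}(y),
\]
where $G_{\Omega}(\cdot,y)$ solves $-\Delta_{g}G_{\Omega}(\cdot,y)=\delta_{y}$ (Dirac at $y$) in $\Omega$ with $G_{\Omega}(\cdot,y)\big|_{\partial\Omega}=0$.

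The geometric input I would use is the two-sided bound
\[
0\le G_{\Omega}(x,y)\le -\frac{1}{2\pi}\ln\mathrm{dist}_{g}(x,y)+C_{\Omega},\qquad x,y\in\Omega,
\]
with $C_{\Omega}$ depending only on $\Omega$ and the fixed metric $g$. The lower bound is the maximum principle. For the upper bound one writes $G_{\Omega}(\cdot,y)=\Gamma(\cdot,y)-h_{y}$, where $\Gamma$ is a fixed parametrix equal to $-\frac{1}{2\pi}\ln\mathrm{dist}_{g}(x,y)$ near the diagonal with $-\Delta_{g}\Gamma(\cdot,y)-\delta_{y}$ uniformly bounded, and $h_{y}$ solves the resulting elliptic problem on $\Omega$ whose boundary data $\Gamma(\cdot,y)\big|_{\partial\Omega}$ are uniformly bounded below; the minimum principle then bounds $h_{y}$ from below uniformly in $y$.

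Granting this, since $\abs{f}\dif\mu_{g}$ is a probability measure and $G_{\Omega}\ge0$, Jensen's inequality gives, for every $x\in\Omega$,
\[
\exp\big((4\pi-\delta)\abs{u(x)}\big)\le\exp\Big((4\pi-\delta)\!\int_{\Omega}\!G_{\Omega}(x,y)\abs{f(y)}\dif\mu_{g}(y)\Big)\le\int_{\Omega}\exp\big((4\pi-\delta)G_{\Omega}(x,y)\big)\abs{f(y)}\dif\mu_{g}(y),
\]
so that, integrating in $x$ and using Fubini's theorem,
\[
\int_{\Omega}\exp\big((4\pi-\delta)\abs{u(x)}\big)\dif\mu_{g}(x)\le\sup_{y\in\Omega}\int_{\Omega}\exp\big((4\pi-\delta)G_{\Omega}(x,y)\big)\dif\mu_{g}(x).
\]
By the Green function bound, $\exp\big((4\pi-\delta)G_{\Omega}(x,y)\big)\le C_{\delta}\,\mathrm{dist}_{g}(x,y)^{-(2-\delta/2\pi)}$, and since the exponent $2-\delta/2\pi$ is strictly less than the dimension $2$, the right-hand integral converges uniformly in $y$ to a constant depending only on $\delta$ and $\Omega$; this is the claimed estimate. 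The step that I expect to require the most care is precisely the Green function bound with constants depending only on $\Omega$ and the (fixed) metric, i.e.\ upgrading the classical planar estimate $0\le G_{\Omega}(x,y)\le-\frac{1}{2\pi}\ln\abs{x-y}+C$ to the curved domain uniformly in $x,y$; everything else is Jensen's inequality together with Fubini's theorem. One could alternatively localize in isothermal coordinates and quote \cite{BreMer91} directly, but the global Green function argument is cleaner.
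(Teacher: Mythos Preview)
Your argument is correct and is precisely the classical Brezis--Merle proof: Green representation, the pointwise bound $0\le G_{\Omega}(x,y)\le -\frac{1}{2\pi}\ln\mathrm{dist}_{g}(x,y)+C_{\Omega}$, Jensen against the probability measure $\abs{f}\dif\mu_{g}$, and Fubini. The paper does not supply its own proof of this lemma; it simply recalls the estimate from \cite[Theorem~1]{BreMer91} (with the Riemannian adaptation in \cite{DinJosLiWan97}), so your proposal is in fact filling in exactly what the paper cites. The only point worth tightening is the Green function upper bound uniformly in $y\in\Omega$ (as you yourself flag): when $y$ approaches $\partial\Omega$ the boundary datum $\Gamma(\cdot,y)\big|_{\partial\Omega}$ is not uniformly bounded, but this is harmless because only a \emph{lower} bound on $h_{y}$ is needed and $\Gamma(\cdot,y)\big|_{\partial\Omega}\ge -C_{\Omega}$ always holds (since $\mathrm{dist}_{g}(x,y)\le\mathrm{diam}(\Omega)$).
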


As a consequence, we have the following Lemma (cf. \cite[Lemma 2.8]{DinJosLiWan97}).

\begin{lem}\label{lem:basic}
If $x\notin S$, then there is a geodesic ball $B^{g}_r(x)\subset\Sigma\setminus S$ and a positive constant $C=C_x$ such that
\begin{align*}
\norm{u_n-\bar u_n}_{L^{\infty}\left(B^{g}_r(x)\right)}\leq C.
\end{align*}
\end{lem}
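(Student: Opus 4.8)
Fix $x\notin S$. Since $\mu$ is a finite Radon measure and $S$ is a finite set, and since $x\notin S$ means $\mu(\{x\})<4\pi$, the downward continuity $\mu(\{x\})=\lim_{r\to0}\mu(\overline{B^{g}_{2r}(x)})$ lets us choose $r>0$ so small that $\overline{B^{g}_{2r}(x)}\cap S=\emptyset$ and $\mu(\overline{B^{g}_{2r}(x)})<4\pi$. Because $V_ne^{u_n}\dif\mu_{g}\rightharpoonup\mu$ and $\overline{B^{g}_{2r}(x)}$ is compact, $\limsup_{n}\int_{B^{g}_{2r}(x)}V_ne^{u_n}\dif\mu_{g}\leq\mu(\overline{B^{g}_{2r}(x)})$, so there is $\delta\in(0,4\pi)$ with
$\int_{B^{g}_{2r}(x)}V_ne^{u_n}\dif\mu_{g}\leq 4\pi-\delta$ for all large $n$. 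Also, by \autoref{global-existence} (the estimate \eqref{eq:heu} applied at $t=t_n$) and \eqref{eq:mass} together with Jensen's inequality, $\int_{\Sigma}he^{u_n}\dif\mu_{g}\geq C^{-1}>0$ and $\bar u_n\leq C$; hence $V_ne^{\bar u_n}=\tfrac{8\pi he^{\bar u_n}}{\int_{\Sigma}he^{u_n}\dif\mu_{g}}\leq C$ uniformly in $n$, a fact I will use below.

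On $\Omega\coloneqq B^{g}_{2r}(x)$, split $u_n-\bar u_n=v_n^{(1)}+v_n^{(2)}+v_n^{(3)}+w_n$, where $v_n^{(i)}$ solves the Dirichlet problem with zero boundary data and right-hand sides $V_ne^{u_n}$, $-\rho$, $-f_ne^{u_n/2}$ respectively, and $w_n$ is the $\Delta_g$-harmonic function with boundary values $u_n-\bar u_n$ on $\partial\Omega$. For $v_n^{(1)}$, since $\|V_ne^{u_n}\|_{L^1(\Omega)}\leq 4\pi-\delta$, \autoref{lem:BM} applied with a parameter $\delta'<\delta$ (and monotonicity of the exponential in the $L^1$-norm) gives $\int_\Omega e^{q|v_n^{(1)}|}\dif\mu_{g}\leq C$ for some $q>1$; in particular $e^{v_n^{(1)}}$ is bounded in $L^q(\Omega)$ and $v_n^{(1)}$ in $L^1(\Omega)$. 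For $v_n^{(2)}$, the right-hand side is the constant $-\rho$, so $v_n^{(2)}$ is bounded in $W^{2,p}(\Omega)\subset L^\infty(\Omega)$. For $v_n^{(3)}$, Cauchy--Schwarz and \eqref{eq:mass} give $\|f_ne^{u_n/2}\|_{L^1(\Omega)}\leq\|f_n\|_{L^2(\Sigma)}\big(\int_\Sigma e^{u_n}\dif\mu_{g}\big)^{1/2}\to0$ by \eqref{eq:assumption}, so \autoref{lem:BM} yields $\int_\Omega e^{M|v_n^{(3)}|}\dif\mu_{g}\leq C$ for every $M>0$ once $n$ is large; thus $e^{v_n^{(3)}}$ is bounded in every $L^Q(\Omega)$ and $v_n^{(3)}$ in $L^1(\Omega)$. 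Finally $w_n=(u_n-\bar u_n)-\sum_i v_n^{(i)}$ is $\Delta_g$-harmonic in $\Omega$, hence by the interior local-boundedness estimate $\|w_n\|_{L^\infty(B^{g}_{3r/2}(x))}\leq C\|w_n\|_{L^1(\Omega)}\leq C\big(\|u_n-\bar u_n\|_{L^1(\Omega)}+\sum_i\|v_n^{(i)}\|_{L^1(\Omega)}\big)\leq C$, using the a priori bound $\|u_n-\bar u_n\|_{W^{1,p}(\Sigma)}\leq C_p$ ($1\leq p<2$) recalled before the definition of $S$, together with the $L^1(\Omega)$-bounds on the $v_n^{(i)}$.

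Combining, $e^{u_n-\bar u_n}=e^{v_n^{(1)}}e^{v_n^{(2)}}e^{v_n^{(3)}}e^{w_n}$ is bounded in $L^q\big(B^{g}_{3r/2}(x)\big)$ for some $q>1$. Now rewrite the equation \eqref{eq:sequence} as
\begin{align*}
-\Delta_g\big(u_n-\bar u_n\big)=\big(V_ne^{\bar u_n}\big)e^{u_n-\bar u_n}-\rho-\big(f_ne^{\bar u_n/2}\big)e^{(u_n-\bar u_n)/2}.
\end{align*}
Since $V_ne^{\bar u_n}\leq C$ and $\|f_ne^{\bar u_n/2}\|_{L^2(\Sigma)}\leq Ce^{\bar u_n/2}\|f_n\|_{L^2(\Sigma)}\leq C$, and since $e^{u_n-\bar u_n}\in L^q$ and $e^{(u_n-\bar u_n)/2}\in L^{2q}$, the right-hand side is bounded in $L^{s_0}\big(B^{g}_{3r/2}(x)\big)$ for some $s_0>1$. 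Interior $W^{2,s_0}$ elliptic estimates (using also the $W^{1,p}$, hence $L^{s_0}$, bound on $u_n-\bar u_n$) give $\|u_n-\bar u_n\|_{W^{2,s_0}(B^{g}_{r}(x))}\leq C$, and the Sobolev embedding $W^{2,s_0}\hookrightarrow C^0$ on a surface (as $2s_0>2$) yields $\|u_n-\bar u_n\|_{L^\infty(B^{g}_{r}(x))}\leq C_x$ for all large $n$; enlarging the constant to absorb the finitely many remaining (smooth) $u_n$ finishes the proof, after relabelling $r$.

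\textbf{Main obstacle.} The delicate point is not the Brezis--Merle truncation argument itself but the bootstrap to an $L^{s_0}$ bound, $s_0>1$, on $V_ne^{u_n}$ and $f_ne^{u_n/2}$ locally, because $\bar u_n$ may diverge to $-\infty$; this is exactly where one must exploit the structure of the problem, namely $\bar u_n\leq C$ (from \eqref{eq:mass}) combined with the lower bound $\int_\Sigma he^{u_n}\dif\mu_{g}\geq C^{-1}$ (from \eqref{eq:heu}), to get $V_ne^{\bar u_n}\leq C$. The harmonic-part estimate also looks circular at first glance but is handled by the interior $L^\infty$-versus-$L^1$ bound together with the already-established global $W^{1,p}$ control.
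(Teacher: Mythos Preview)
Your argument is correct and follows essentially the same Brezis--Merle decomposition as the paper: split the right-hand side of \eqref{eq:sequence} into Dirichlet pieces on a small ball, use \autoref{lem:BM} to get $L^q$-integrability of $e^{|v_n^{(1)}|}$, control the harmonic remainder via the interior $L^\infty$-versus-$L^1$ estimate together with the global $W^{1,p}$ bound, and then bootstrap. The only cosmetic differences are that the paper groups $V_ne^{u_n}-f_ne^{u_n/2}$ into a single Dirichlet piece (rather than your three-way split), and in the final step applies the $W^{2,s_0}$ estimate to that Dirichlet piece rather than to $u_n-\bar u_n$ via interior estimates; in particular the paper uses only $\bar u_n\leq C$ and $V_n\leq C$ directly to bound $\|e^{u_n}\|_{L^p}$, so your detour through $V_ne^{\bar u_n}\leq C$ (invoking \eqref{eq:heu}) is not needed.
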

\begin{proof}There exist $\delta=\delta_x\in\left(0,2\pi\right), r=r_x\in\left(0,\mathrm{inj}\left(\Sigma\right)/4\right), N=N_x\in\mathbb{N}$ such that
\begin{align*}
\int_{B^{g}_{4r}(x)}\abs{V_n e^{u_n}-f_ne^{u_n/2}}\dif\mu_{g}\leq 4\pi-2\delta,\quad\forall n\geq N.
\end{align*}
Solve
\begin{align*}
\begin{cases}
-\Delta_{g}y_n=-\rho,&\text{in}\ B^{g}_{4r}(x),\\
y_n=0,&\text{on}\ \partial B^{g}_{4r}(x).
\end{cases}
\end{align*}
It is well know that $\set{y_n}$ is bounded in $L^{\infty}\left(B^{g}_{4r}(x)\right)$. 
Solve
\begin{align}\label{eq:w_n}
\begin{cases}
-\Delta_{g}w_n=V_n e^{u_n}-f_ne^{u_n/2},&\text{in}\ B^{g}_{4r}(x),\\
w_n=0,&\text{on}\ \partial B^{g}_{4r}(x).
\end{cases}
\end{align}
According to \autoref{lem:BM}, we have
\begin{align*}
\norm{e^{\abs{w_n}}}_{L^{p}\left(B^{g}_{4r}(x)\right)}\leq C,\quad p=\dfrac{4\pi-\delta}{4\pi-2\delta}>1.
\end{align*}
In particular, $\set{w_n}$ is bounded in $L^1\left(B^{g}_{4r}(x)\right)$. Since $h_n\coloneqq u_n-\bar u_n-y_n-w_n$ is harmonic in $B^{g}_{4r}(x)$, we have
\begin{align*}
\norm{h_n}_{L^{\infty}\left(B^{g}_{2r}(x)\right)}\leq &C\norm{h_n}_{L^{1}\left(B^{g}_{4r}(x)\right)}\\
\leq&C\left(\norm{u_n-\bar u_n}_{L^{1}\left(B^{g}_{4r}(x)\right)}+\norm{y_n}_{L^{1}\left(B^{g}_{4r}(x)\right)}+\norm{w_n}_{L^{1}\left(B^{g}_{4r}(x)\right)}\right)\\
\leq&C\left(\norm{u_n-\bar u_n}_{L^{1}\left(\Sigma\right)}+\norm{w_n}_{L^{1}\left(B^{g}_{4r}(x)\right)}+\norm{y_n}_{L^{1}\left(B^{g}_{4r}(x)\right)}\right)\\
\leq&C.
\end{align*}
Thus
\begin{align*}
\norm{e^{u_n}}_{L^p\left(B_{2r}^{g}(x)\right)}\leq C.
\end{align*}
Applying the standard elliptic estimates for \eqref{eq:w_n}, we get
\begin{align*}
\norm{w_n}_{L^{\infty}\left(B^g_{r}(x)\right)}\leq C.
\end{align*}
Hence
\begin{align*}
\norm{u_n-\bar u_n}_{L^{\infty}\left(B^g_{r}(x)\right)}\leq C.
\end{align*}
\end{proof}

\begin{theorem}If $\set{u_n}$ is a blowup sequence, then $S$ is nonempty and
\begin{align*}
    S=\set{x\in\Sigma: \exists\  \set{x_n}\subset\Sigma,\  \lim_{n\to\infty}x_n=x,\  \lim_{n\to\infty}u_n\left(x_n\right)=+\infty.}
\end{align*}
Moreover $\lim_{n\to\infty}\bar u_n=-\infty$. Thus $\set{u_n}$ converges to $-\infty$ uniformly on compact subsets of $\Sigma\setminus S$ and $\mu=\sum_{x\in S}\mu\left(\set{x}\right)\delta_{x}$ is a Dirac measure.
\end{theorem}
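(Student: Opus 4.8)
The plan is to carry out a Brezis--Merle-type blow-up analysis: the local estimate of \autoref{lem:basic} confines all blow-up to the finite set $S$, and the global energy inequality \eqref{eq:functional-1} supplied by the flow forces the averages $\bar u_n$ to diverge to $-\infty$, after which the remaining assertions are immediate. Throughout I keep the subsequence already fixed before the statement, along which $V_ne^{u_n}\dif\mu_{g}\rightharpoonup\mu$ and $u_n-\bar u_n\rightharpoonup G$ in $W^{1,p}(\Sigma)$ for $1<p<2$; passing to a further subsequence we may assume in addition that $u_n-\bar u_n\to G$ a.e.\ and that $\bar u_n$ converges in $[-\infty,+\infty]$. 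By Jensen's inequality and \eqref{eq:mass} one has $\bar u_n\le C$, so the only open point about $\bar u_n$ is a lower bound. Write $\Sigma_\infty$ for the set on the right-hand side of the claimed identity. If $x\notin S$, then \autoref{lem:basic} provides a ball $B^{g}_r(x)$ with $\norm{u_n-\bar u_n}_{L^\infty(B^{g}_r(x))}\le C$, whence $u_n\le C$ on $B^{g}_r(x)$ and no sequence $x_n\to x$ can satisfy $u_n(x_n)\to+\infty$; thus $\Sigma_\infty\subseteq S$, and moreover $S\ne\emptyset$, for otherwise a finite cover of $\Sigma$ by such balls would bound $\max_\Sigma u_n$, contradicting that $\set{u_n}$ is a blow-up sequence. (Finiteness of $S$ follows at once from $\mu(\Sigma)\le C$.)

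\emph{Divergence of the averages.} Suppose, for contradiction, that $\bar u_n$ does not tend to $-\infty$; then along a further subsequence $\abs{\bar u_n}\le C$. By \eqref{eq:functional-1} this forces $\norm{\nabla_{g}u_n}_{L^2(\Sigma)}^2\le C$, so by the Poincar\'e inequality $\set{u_n}$ is bounded in $H^1(\Sigma)$, and then the Trudinger--Moser inequality \eqref{eq:TM} bounds $\set{e^{u_n}}$ in $L^q(\Sigma)$ for every $q<\infty$. Since $\norm{f_ne^{u_n/2}}_{L^q(\Sigma)}\le\norm{f_n}_{L^2(\Sigma)}\norm{e^{u_n/2}}_{L^{2q/(2-q)}(\Sigma)}\to0$ for $1<q<2$, the right-hand side of \eqref{eq:sequence} is bounded in such an $L^q(\Sigma)$, and elliptic regularity gives $\set{u_n}$ bounded in $W^{2,q}(\Sigma)\hookrightarrow C^0(\Sigma)$, contradicting $\limsup_n\max_\Sigma u_n=+\infty$. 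Hence $\bar u_n\to-\infty$. This is the crux of the argument, and the place where the hypothesis $h\ge0$ (so that $V_n$ may vanish) matters: one cannot bound $\bar u_n$ from below by testing \eqref{eq:sequence} against $h$ as in the positive-$h$ case, so the dissipation of $J$ along the flow encoded in \eqref{eq:functional-1} is used here in place of such a pointwise argument.

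\emph{Conclusion.} Covering a compact $K\subset\Sigma\setminus S$ by finitely many balls as in \autoref{lem:basic}, $u_n-\bar u_n$ is bounded in $L^\infty(K)$, so $u_n\to-\infty$ uniformly on $K$, and therefore $V_ne^{u_n}\to0$ uniformly on $K$ since $0\le V_n\le C$. As $S$ is finite (hence closed and $\mu_{g}$-null), this gives $\mu(\Sigma\setminus S)=0$, i.e.\ $\mu=\sum_{x\in S}\mu(\set{x})\delta_x$. Finally, to see $S\subseteq\Sigma_\infty$, fix $x_0\in S$: for each $\rho>0$, lower semicontinuity of mass on open sets gives $\liminf_n\int_{B^{g}_\rho(x_0)}V_ne^{u_n}\dif\mu_{g}\ge\mu(B^{g}_\rho(x_0))\ge\mu(\set{x_0})\ge4\pi$, while if $u_n\le M$ on some fixed ball $B^{g}_{\rho_0}(x_0)$ along a subsequence then $\int_{B^{g}_\rho(x_0)}V_ne^{u_n}\dif\mu_{g}\le Ce^M\abs{B^{g}_\rho(x_0)}_{g}$ for $\rho<\rho_0$, which is $<4\pi$ once $\rho$ is small --- a contradiction. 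Hence $\sup_{B^{g}_\rho(x_0)}u_n\to+\infty$ for every fixed $\rho>0$, and a routine diagonal argument (choosing $\rho=1/k$) produces $x_n\to x_0$ with $u_n(x_n)\to+\infty$, so $x_0\in\Sigma_\infty$. Together with the first paragraph this proves $S=\Sigma_\infty$ and finishes the proof; everything other than the second paragraph is the standard Brezis--Merle machinery adapted from \cite{DinJosLiWan97}.
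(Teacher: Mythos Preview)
Your proof is correct, and for everything except the divergence of $\bar u_n$ it runs parallel to the paper's.  The real difference is in how you show $\bar u_n\to-\infty$.  The paper does not invoke the flow energy at this point: instead, assuming $\bar u_n\ge -C$ along a subsequence, it picks $x_0\in S$, notes that $u_n$ is bounded on $\partial B^g_r(x_0)$ by \autoref{lem:basic}, solves a Dirichlet problem $-\Delta_g z_n=V_ne^{u_n}-\rho-f_ne^{u_n/2}$ in $B^g_r(x_0)$ with boundary data $-M$, and uses the maximum principle to get $z_n\le u_n$; the limit $z$ satisfies $-\Delta_g z=\mu(\{x_0\})\delta_{x_0}-\rho$, hence $e^z\notin L^1$ since $\mu(\{x_0\})\ge4\pi$, and Fatou's lemma contradicts $\int_\Sigma e^{u_n}\le C$.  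Your route goes through the global flow inequality \eqref{eq:functional-1} to get an $H^1$ bound, then Trudinger--Moser and elliptic regularity to reach an $L^\infty$ bound; it is shorter and avoids the auxiliary Dirichlet problem, but it is tied to the flow sequence, whereas the paper's argument works for any sequence satisfying \eqref{eq:sequence}--\eqref{eq:mass}.  (Your remark about ``testing against $h$'' is a bit off --- the standard Brezis--Merle argument in the positive-$h$ case is essentially the paper's comparison proof, not a testing argument --- but this does not affect the mathematics.)

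For the inclusion $S\subseteq\Sigma_\infty$ the paper is also slightly different: it takes $x_n$ to be the maximiser of $u_n$ on $\overline{B^g_r(x_0)}$ and argues directly that $u_n(x_n)\to+\infty$ and $x_n\to x_0$, which replaces your lower-semicontinuity-plus-diagonal construction.  Both are standard and give the same conclusion.
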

\begin{proof}
According to \autoref{lem:basic}, we know that $\set{u_n-\bar u_n}$ is bounded in $L^{\infty}_{loc}\left(\Sigma\setminus S\right)$. 

If $S=\emptyset$, then $\set{u_n-\bar u_n}$ is bounded in $L^{\infty}\left(\Sigma\right)$ which implies that $\set{u_n^{+}}$ is bounded in $L^{\infty}\left(\Sigma\right)$ which is a contradiction.

We claim that $\lim_{n\to\infty}\bar u_n=-\infty$. Otherwise, there is a subsequence of $\set{u_n}$ which also denoted by $\set{u_n}$ such that
\begin{align*}
   \bar u_n\geq-C.
\end{align*}
For $x\in S$, choose $r>0$ such that $B^{g}_{2r}(x)\cap S=\set{x}$.
According to \autoref{lem:basic}, $\set{u_n}$ is bounded in $L^{\infty}_{loc}\left(B^{g}_{2r}(x)\setminus\set{x}\right)$. In particular, $M\coloneqq\sup_{n}\norm{u_n}_{L^{\infty}\left(\partial B^{g}_{r}\left(x\right)\right)}<\infty$. Solve
\begin{align*}
\begin{cases}
    -\Delta_{g} z_n=V_ne^{u_n}-\rho-f_ne^{u_n/2},&\text{in}\ B^{g}_{r}(x),\\
    z_n=-M,&\text{on}\ B^{g}_{r}(x).
\end{cases}
\end{align*}
By potential estimates, we know that $z_n$ is bounded in $W^{1,p}\left(B^{g}_{r}(x)\right)$ for all $1<p<2$. Thus, up to a subsequence, $z_n$ converges weakly to $z\in W^{1,p}\left(B^{g}_{r}(x)\right)$ for all $1<p<2$ and strongly in $L^q\left(B^{g}_{r}(x)\right)$ for all $1<q<\infty$. Then $z$ is a weak solution to
\begin{align*}
\begin{cases}
    -\Delta_{g} z=\mu\left(\set{x}\right)\delta_{x}-\rho,&\text{in}\ B^{g}_{r}(x),\\
    z=-M,&\text{on}\ B^{g}_{r}(x).
\end{cases}
\end{align*}
Thus
\begin{align*}
    z(\cdot)\geq-\dfrac{\mu\left(\set{x}\right)}{2\pi}\ln\mathrm{dist}_{g}(\cdot,x)-C.
\end{align*}
Since $\mu\left(\set{x}\right)\geq4\pi$, we get
\begin{align*}
    \int_{B^{g}_{r}(x)}e^{z}\dif\mu_{g}=\infty.
\end{align*}
On the other hand, the maximum principle implies that $z_n\leq u_n$. By Fatou's Lemma,
\begin{align*}
    \infty=\int_{B^{g}_{r}(x)}e^{z}\dif\mu_{g}\leq\liminf_{n\to\infty}\int_{B^{g}_{r}(x)}e^{z_n}\dif\mu_{g}\leq\liminf_{n\to\infty}\int_{B^{g}_{r}(x)}e^{u_n}\dif\mu_{g}\leq C,
\end{align*}
which is a contradiction.

Hence $\set{u_n}$ converges to $-\infty$ uniformly on compact subsets of $\Sigma\setminus S$. Thus for every domain $\Omega\subset\Sigma$
\begin{align*}
    \mu\left(\Omega\right)=&\lim_{n\to\infty}\int_{\Omega}V_n e^{u_n}\dif\mu_{
    g}\\
    =&\sum_{x\in S}\lim_{r\to0}\lim_{n\to\infty}\int_{\Omega\setminus B_{r}^{g}(x)}V_n e^{u_n}\dif\mu_{
    g}+\sum_{x\in S}\lim_{r\to0}\lim_{n\to\infty}\int_{\Omega\cap B_{r}^{g}(x)}V_n e^{u_n}\dif\mu_{
    g}\\
    =&\sum_{x\in S\cap\Omega}\lim_{r\to0}\lim_{n\to\infty}\int_{ B_{r}^{g}(x)}V_n e^{u_n}\dif\mu_{
    g}\\
    =&\sum_{x\in S\cap\Omega}\mu\left(\set{x}\right)\\
    =&\mu\left(\Omega\cap S\right).
\end{align*}
In other words, $\mu=\sum_{x\in S}\mu\left(\set{x}\right)\delta_{x}$ is a Dirac measure.

According to \autoref{lem:basic}, we obtain
\begin{align*}
    \set{x\in\Sigma: \exists\  \set{x_n}\subset\Sigma,\  \lim_{n\to\infty}x_n=x,\  \lim_{n\to\infty}u_n\left(x_n\right)=+\infty.}\subset S.
\end{align*}
For $x_0\in S$, choose a geodesic ball  $B_{2r}^{g}\left(x_0\right)$ such that $B_{2r}^{g}\left(x_0\right)\cap S=\set{x_0}$. Choose $x_n\in\overline{B_{r}^{g}\left(x_0\right)}$ such that
\begin{align*}
    \lambda_n\coloneqq\max_{\overline{B_{r}^{\Sigma}\left(x_0\right)}}u_n=u_n\left(x_n\right).
\end{align*}
\begin{description}
     \item [Case 1.] $\lim_{n\to\infty}\lambda_n=\infty$. 
    
    Otherwise, up to a subsequence,  $\set{u_n^{+}}$ is bounded in $L^{\infty}\left(B^{g}_r\left(x_0\right)\right)$. Thus $\set{e^{u_n}}$ is bounded in $L^{\infty}\left(B^{g}_r\left(x_0\right)\right)$ which is a contradiction.
    
    \item [Case 2.]  $\lim_{n\to\infty}x_n=x_0$.
    
    Otherwise, up to a subsequence, $\lim_{n\to\infty}x_n=\tilde x\in\overline{B_{r}\left(x_0\right)}\setminus\set{x_0}$. Thus $\tilde x$ is not a singular point which is impossible according \autoref{lem:basic} and the above claim.
\end{description}
Consequently,
\begin{align*}
    S\subset\set{x\in\Sigma: \exists\  \set{x_n}\subset\Sigma,\  \lim_{n\to\infty}x_n=x,\  \lim_{n\to\infty}u_n\left(x_n\right)=+\infty.}
\end{align*}

\end{proof}

Now we want to prove that $\mu\left(\set{x_0}\right)\geq8\pi$. We assume additionally that $V_n$ converges to $V$ in $C^0\left(\Sigma\right)$.
\begin{lem}For each $x_0\in S$, we have $V(x_0)>0$ and  $\mu\left(\set{x_0}\right)\geq8\pi$.
\end{lem}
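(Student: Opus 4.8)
The plan is to carry out the now-standard blow-up analysis at a singular point $x_0\in S$, adapted to the flow setting where the extra term $f_n e^{u_n/2}$ is present but controlled by $\|f_n\|_{L^2}\to 0$. First I would fix a geodesic ball $B_{2r}^g(x_0)$ with $B_{2r}^g(x_0)\cap S=\{x_0\}$, and recall from the previous theorem that on $\partial B_r^g(x_0)$ the sequence $u_n$ is uniformly bounded while $\bar u_n\to-\infty$, so $u_n\to-\infty$ uniformly on $\partial B_r^g(x_0)$. Pick $x_n\to x_0$ with $\lambda_n:=u_n(x_n)=\max_{\overline{B_r^g(x_0)}}u_n\to+\infty$. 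To see $V(x_0)>0$, I would argue by contradiction: if $V(x_0)=0$ then $\|V_n\|_{L^\infty(B_\sigma^g(x_0))}\to 0$ for small $\sigma$, and then the standard argument of Brezis–Merle (as in \autoref{lem:BM}, applied on $B_\sigma^g(x_0)$ after subtracting the harmonic extension of the boundary data and the $-\rho$ part) forces $e^{u_n}$ to be bounded in $L^p(B_{\sigma/2}^g(x_0))$ for some $p>1$, hence $\mu(\{x_0\})=0$, contradicting $x_0\in S$. The key point is that the total mass $\int_{B_\sigma^g(x_0)}|V_n e^{u_n}-f_n e^{u_n/2}|\dif\mu_g$ can be made $<4\pi-2\delta$ once $\|V_n\|_{L^\infty}$ is small, using $\int e^{u_n}\le C$ and Cauchy–Schwarz on the $f_n$ term together with $\|f_n\|_{L^2}\to 0$.

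Next, for the lower bound $\mu(\{x_0\})\ge 8\pi$ I would perform the usual rescaling. Set $\delta_n=e^{-\lambda_n/2}$ and, in normal coordinates centered at $x_n$, define $v_n(y)=u_n(x_n+\delta_n y)-\lambda_n$ on an exhausting sequence of Euclidean balls. Then $v_n\le 0$, $v_n(0)=0$, and $v_n$ solves
\begin{align*}
-\Delta_{g_n} v_n = V_n(x_n+\delta_n y)e^{v_n}-\rho\,\delta_n^2 - \delta_n\, f_n(x_n+\delta_n y) e^{v_n/2},
\end{align*}
where $g_n$ is the rescaled metric converging to the Euclidean one in $C^2_{loc}$. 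Since $\rho\delta_n^2\to 0$ and, crucially, $\delta_n\|f_n(x_n+\delta_n\cdot)e^{v_n/2}\|_{L^2_{loc}}$ is controlled (after changing variables, $\delta_n^2\int f_n^2 e^{v_n}\dif y = \int_{B} f_n^2 e^{u_n}\dif\mu_g\cdot(1+o(1)) \le \|f_n\|_{L^2}^2 e^{\lambda_n}\cdots$ — here I must be careful: one uses instead that $\int f_n^2\,\dif\mu_g\to0$ and $e^{u_n}\le e^{\lambda_n}$ locally so the rescaled $L^1$ norm of the forcing goes to zero), standard elliptic estimates (Harnack plus the local $L^p$ bound on $e^{v_n}$ exactly as in \autoref{lem:basic}) give $v_n\to v$ in $C^1_{loc}(\mathbb R^2)$, where $v$ solves $-\Delta v = V(x_0)e^v$ on $\mathbb R^2$, $v(0)=0=\max v$, and $\int_{\mathbb R^2}V(x_0)e^v<\infty$. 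By the classification theorem of Chen–Li, $v(y)=-2\ln(1+\frac{V(x_0)}{8}|y|^2)$ and $\int_{\mathbb R^2}V(x_0)e^v\,\dif y=8\pi$. A Fatou / lower-semicontinuity argument then yields $\mu(\{x_0\})\ge \lim_{R\to\infty}\liminf_n\int_{B_{R\delta_n}(x_n)}V_n e^{u_n}\dif\mu_g = \int_{\mathbb R^2}V(x_0)e^v\,\dif y = 8\pi$.

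The main obstacle I expect is handling the forcing term $f_n e^{u_n/2}$ correctly in the rescaling: naively it could blow up under the scaling $u_n\mapsto u_n-\lambda_n$ because $e^{u_n/2}$ is large near $x_n$. The resolution is that what enters the rescaled equation is $\delta_n f_n(x_n+\delta_n y)e^{v_n(y)/2} = e^{-\lambda_n/2} f_n e^{(u_n-\lambda_n)/2} = e^{-\lambda_n} f_n e^{u_n/2}$, and in $L^1_{loc}(\dif y)$ this has norm $\asymp \delta_n^{-2}\int_{B_{R\delta_n}(x_n)} e^{-\lambda_n}|f_n| e^{u_n/2}\dif\mu_g \le \delta_n^{-2} e^{-\lambda_n/2}\|f_n\|_{L^2(B)}\, |B_{R\delta_n}|^{1/2}\,(\,\cdot\,)$ — so one checks it is $O(\|f_n\|_{L^2})\cdot R$, which tends to zero. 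Equivalently, one can absorb $-f_n e^{u_n/2}$ into the right-hand side from the start and note its contribution to the local $L^1$ mass is $\le \|f_n\|_{L^2}\,\big(\int_{B_{2r}^g(x_0)} e^{u_n}\dif\mu_g\big)^{1/2}\to 0$, so it never affects any of the concentration-mass computations; this is exactly why the hypothesis $\|f_n\|_{L^2(\Sigma)}\to 0$ (rather than pointwise smallness) suffices. The positivity $V(x_0)>0$ is needed precisely so that the limit equation $-\Delta v=V(x_0)e^v$ is non-degenerate and the Chen–Li classification applies; and it is exactly here that the non-negativity (rather than positivity) of the original $h$ costs nothing, since at a genuine blow-up point $V$ cannot vanish.
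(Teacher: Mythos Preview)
Your proposal is correct and follows essentially the same route as the paper: rescale at the maximum point, pass to the limit equation on $\mathbb{R}^2$, invoke the Chen--Li classification, and conclude $\mu(\{x_0\})\ge 8\pi$ by Fatou. The only differences are cosmetic: you establish $V(x_0)>0$ beforehand via a separate Brezis--Merle argument, whereas the paper extracts it after rescaling (if $V(x_0)=0$ the limit $\tilde u_\infty$ would be harmonic, nonpositive, with $\tilde u_\infty(0)=0$, hence $\tilde u_\infty\equiv 0$, contradicting $\int_{\mathbb{R}^2}e^{\tilde u_\infty}<\infty$); and your treatment of the forcing term can be streamlined considerably --- the paper simply observes that the rescaled coefficient $e^{-\lambda_n/2}f_n(x_n+e^{-\lambda_n/2}\,\cdot\,)$ has $L^2$ norm (up to the bounded conformal factor) exactly $\|f_n\|_{L^2(B^g_{2r}(x_0))}\to 0$ by change of variables, and since $e^{\tilde u_n/2}\le 1$ this already bounds the full term in $L^2_{loc}$, giving weak $H^2_{loc}$ convergence directly.
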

\begin{proof}
Assume $B^{g}_{2r}(x_0)\cap S=\set{x_0}$. Choose $x_n\in B^{g}_{2r}(x_0)$ such that
\begin{align*}
\lambda_n\coloneqq\max_{\overline{B^{g}_{r}(x_0)}}u_n=u_{n}\left(x_n\right).
\end{align*}
It is easy to check that
\begin{align*}
\lim_{n\to\infty}\lambda_n=+\infty,\quad\lim_{n\to\infty}x_n=x_0.
\end{align*}
Now choose a conformal coordinate $\set{x}$ centered at $x_0$. We have $g=e^{\phi(x)}\abs{\dif x}^2$ and
\begin{align*}
-\Delta_{\mathbb{R}^2}u_n=V_n e^{\phi} e^{u_n}-e^{\phi}\rho-f_ne^{\phi}e^{u_n/2},\quad\abs{x}<2\tilde r.
\end{align*}
Consider 
\begin{align*}
\tilde u_n(x)=u_n\left(x_n+e^{-\lambda_n/2}x\right)-\lambda_n,
\end{align*}
then for $\abs{x}<e^{\lambda_n/2}\tilde r$,
\begin{align*}
-\Delta_{\mathbb{R}^2}\tilde u_n(x)=V_n\left(x_n+e^{-\lambda_n/2}x\right) e^{\phi\left(x_n+e^{-\lambda_n/2}x\right)}e^{\tilde u_n(x)}-e^{\phi\left(x_n+e^{-\lambda_n/2}x\right)}\rho-f_n\left(x_n+e^{-\lambda_n/2}x\right)e^{\phi\left(x_n+e^{-\lambda_n/2}x\right)-\lambda_n/2}e^{\tilde u_n(x)/2}.
\end{align*}
We have $\tilde u_n\leq0, \tilde u_n(0)=0$ and
\begin{align*}
\int_{B_{e^{\lambda_n/2}\tilde r}}e^{\tilde u_n}\dif\mu_{\mathbb{R}^2}\leq \int_{B^{g}_{2\tilde r}(x_0)}e^{u_n}\dif\mu_{g}\leq C,\\
\int_{B_{e^{\lambda_n/2}\tilde r}}\abs{f_n\left(x_n+e^{-\lambda_n/2}x\right)e^{\phi\left(x_n+e^{-\lambda_n/2}x\right)-\lambda_n/2}}^2\dif\mu_{\mathbb{R}^2}\leq\int_{B^{g}_{2\tilde r}(x_0)}f_n^2\dif\mu_{g}\to0.
\end{align*}
Thus up to a subsequence, $\set{\tilde u_n}$ converges weakly to $\tilde u_{\infty}$ in $H^{2}_{loc}\left(\mathbb{R}^2\right)$ and strongly in $H^{1}_{loc}\left(\mathbb{R}^2\right)$. In particular, $\tilde u_{\infty}$ is a weak solution to
\begin{align*}
\begin{cases}
-\Delta_{\mathbb{R}^2}\tilde u_{\infty}=V(x_0) e^{\phi(0)}e^{\tilde u_{\infty}},&\text{in}\ \mathbb{R}^2,\\
\int_{\mathbb{R}^2}e^{\tilde u_{\infty}}\dif\mu_{\mathbb{R}^2}<\infty.
\end{cases}
\end{align*}
By a classification theorem of Chen-Li \cite{CheLi91}, we know that 
\begin{align*}
\int_{\mathbb{R}^2}V(x_0) e^{\phi(0)}e^{\tilde u_{\infty}}\dif\mu_{\mathbb{R}^2}=8\pi.
\end{align*}
In particular $V(x_0)>0$. By Fatou's Lemma, we have
\begin{align*}
\int_{\mathbb{R}^2}V(x_0) e^{\phi(0)}e^{\tilde u_{\infty}}\dif\mu_{\mathbb{R}^2}=&\lim_{R\to\infty}\int_{B_{R}}V(x_0) e^{\phi(0)}e^{\tilde u_{\infty}}\dif\mu_{\mathbb{R}^2}\\
\leq&\lim_{R\to\infty}\liminf_{n\to\infty}\int_{B_{R}}V_n\left(x_n+e^{-\lambda_n/2}x\right)e^{\phi\left(x_n+e^{-\lambda_n/2}x\right)}e^{\tilde u_n(x)}\dif\mu_{\mathbb{R}^2}\\
=&\lim_{R\to\infty}\liminf_{n\to\infty}\int_{B^{g}_{e^{-\lambda_n/2}R}\left(x_n\right)}V_n\left(x_n+e^{-\lambda_n/2}x\right) e^{u_n}\dif\mu_{g}\\
\leq&\lim_{r\to0}\liminf_{n\to\infty}\int_{B^{g}_{r}\left(x_0\right)}V e^{u_n}\dif\mu_{g}\\
=&\mu\left(\set{x_0}\right).
\end{align*}
Thus
\begin{align*}
\mu\left(\set{x_0}\right)\geq 8\pi.
\end{align*}
\end{proof}

In our initial model \eqref{eq:model}, we must have $\mu\left(\set{x_0}\right)=8\pi$ and $\#S=1$. Moreover,
\begin{align*}
    V=\lim_{n}V_n=\lim_{n\to\infty}\dfrac{8\pi h}{\int_{\Sigma}he^{u_n}\dif\mu_{g}}=\dfrac{8\pi h}{h(x_0)\int_{\Sigma}e^{u_0}\dif\mu_{g}}
\end{align*}
and $h(x_0)>0$.

\section{Lower bound for the functional}\label{sec:lower}

In this section, we give a lower bound for  $J(u(t))$ along the flow, i.e. we  give the proof of \autoref{thm:lower-bounds}.
\begin{proof}[Proof of \autoref{thm:lower-bounds}]
Suppose our flow develops a singularity as time goes to infinity, we will analyse the asymptotic behavior of the flow near and away from the blow-up point and derive a lower bound of $J(u)$. From the previous compactness argument, there is only one blow-up point when $\rho=8\pi$, denoted by $x_0$. Then there is a sequence of points $\{x_n\}$ such that
\begin{align*}
    \lim_{n\to\infty}x_n=x_0, \ \lambda_n=u_n(x_n)=\max_{\Sigma}u_n=\max_{\Sigma}u(t_n)=+\infty,
\end{align*}where $t_n\to\infty$ as $n\to\infty$. 
 In an isothermal coordinate system $\{x\}$ around $x_0$, we still denote $u_n$ and $x_n$ in this coordinate by $u_n$ and $x_n$, respectively. Set $r_n=e^{-\lambda_n/2}$ and
\begin{align*}
    \tilde{u}_n\coloneqq u_n(x_n+r_n x)-\lambda_n. 
\end{align*}
Then we have $\tilde{u}_n$ weakly converges to $\tilde{u}_{\infty}$ satisfying
\begin{align*}
  \tilde{u}_{\infty}=-2\ln(1+a|x|^2), \ a=\frac{\pi e^{\phi(0)}}{\int_\Sigma e^{u_0}\dif\mu_{g}}
\end{align*}
and 
\begin{align}\label{inside}
\begin{split}
    \lim_{n\to\infty}\frac12\int_{B^g_{r_nR}(x_n)}|du_n|_g^2d\mu_g&=\lim_{n\to\infty}\frac12\int_{B_R(x_n)}|d\tilde{u}_n|^2d\mu_{\mathbb{R}^2}\\
    &=\pi\int_{0}^R\left|\frac{4ar}{1+ar^2}\right|^2rdr\\
    &=8\pi\int_{0}^{aR^2}\frac{s}{(1+s)^2}ds\\
    &=8\pi\int_{0}^{aR^2}\left(\frac{1}{1+s}-\frac{1}{(1+s)^2}\right)ds\\
    &=8\pi\left(\ln(1+aR^2)+\frac{1}{1+aR^2}-1\right)\\
    &=8\pi\ln(aR^2)-8\pi+o_R(1).
    \end{split}
\end{align}
Here and in the following, we use $o_{R}(1),o_{n}(1),  o_{\delta}(1)$ to denote those functions which converges to zero as $R\to+\infty, n\to\infty, \delta\to0$ respectively.

Since $u_n-\bar{u}_n$ converges to $G$ weakly in $W^{1,p}(\Sigma)$ for $1<p<2$ and strongly in $H^2_{loc}(\Sigma\setminus\{x_0\})$ (see Proposition 3.5 in \cite{li2019convergence}) and G satisfies
\begin{align*}
\begin{cases}
-\Delta_gG=8\pi(\delta_{x_0}-1),&\Sigma,\\
\int_\Sigma Gd\mu_g=0.
\end{cases}
\end{align*}
we get
\begin{align*}
\begin{split}
    \lim_{n\to\infty}\frac12\int_{\Sigma\setminus B^g_\delta(x_n)}|du_n|^2d\mu_g
    &=\lim_{n\to\infty}\frac12\int_{\Sigma\setminus B^g_\delta(x_0)}|du_n|^2d\mu_g\\
    &=\frac12\int_{\Sigma\setminus B^g_\delta(x_0)}|dG|^2d\mu_g\\
    &=-\frac12\int_{\Sigma\setminus B^g_\delta(x_0)}G\Delta_gGd\mu_g-\frac12\int_{\partial B^g_\delta(x_0)}G\frac{\partial}{\partial\nu}Gd\mu_g\\
    &=4\pi\int_{B^g_\delta(x_0)}G\dif\mu_{g}-\frac12\int_{\partial B^g_\delta(x_0)}G\frac{\partial}{\partial\nu}Gd\mu_g,
    \end{split}
\end{align*}
where $\nu$ is the normal vector field on $\partial B^g_\delta(x_0)$ pointing to the complement of $B^g_\delta(x_0)$.

In normal coordinate, $G$ has the following expansion
\begin{align*}
    G(x)=-4\ln|x-x_0|+A(x_0)+(b,x-x_0)+(x-x_0)^Tc(x-x_0)+O(|x-x_0|^3).
\end{align*}
Then 
\begin{align}\label{outside}
    \lim_{n\to\infty}\frac12\int_{\Sigma\setminus B^g_\delta(x_n)}|du_n|^2d\mu_g=-16\pi\ln\delta+4\pi A(x_0)+o_\delta(1).
\end{align}

Define
\begin{align*}
    u_n^*(r)=\frac{1}{2\pi}\int_0^{2\pi}u_n(x_n+re^{i\theta})d\theta.
\end{align*}
Then for $r_nR\leq r<s\leq\delta$, we have
\begin{align*}
    \int_{B_s\setminus B_r}|du_n^*|^2dx\leq\int_{B_s\setminus B_r}\left|\frac{\partial u_n}{\partial r}\right|^2dx.
\end{align*}
Notice that 
\begin{align}\label{average limit}
\begin{split}
    &\lim_{n\to\infty}(u_n^*(r_nR)+2\ln r_n)=-2\ln(1+aR^2)=-2\ln(aR^2)+o_R(1),\\
    &\lim_{n\to\infty}(u_n^*(\delta)-\bar u_n)=\frac{1}{2\pi}\int_0^{2\pi}G(\delta e^{i\theta})d\theta=-4\ln\delta+A(x_0)+o_\delta(1).
\end{split}
\end{align}
Let $w_n$ be the harmonic functions in the neck domains $B^g_\delta(x_n)\setminus B^g_{r_nR}(x_n)$ such that
\begin{align*}
    w_n|_{\partial B^g_\delta(x_n)}=u_n^*(\delta), \ w_n|_{\partial B^g_{r_nR}(x_n)}=u_n^*(r_nR).
\end{align*}
Then we have
\begin{align*}
\begin{split}
    \frac12\int_{B^g_\delta(x_n)\setminus B^g_{r_nR}(x_n)}|du_n|^2d\mu_g
    &\geq\frac12\int_{B^g_\delta(x_n)\setminus B^g_{r_nR}(x_n)}|du_n^*|^2d\mu_g\\
    &\geq\frac12\int_{B^g_\delta(x_n)\setminus B^g_{r_nR}(x_n)}|dw_n|^2d\mu_g\\
    &\geq\frac{\pi(u_n^*(\delta)-u_n^*(r_nR))^2}{\ln\delta-\ln(r_nR)}.    
\end{split}
\end{align*}

Set $\tau_n\coloneqq u_n^*(\delta)-u_n^*(r_nR)-\bar{u}_n-2\ln r_n$. It follows from \eqref{average limit} that
\begin{align*}
    \lim_{n\to\infty}\tau_n=-4\ln\delta+A(x_0)+2\ln(aR^2)+o_R(1)+o_\delta(1).
\end{align*}
Then we get
\begin{equation}\label{neck}
    \begin{split}
           \frac12\int_{B^g_\delta(x_n)\setminus B^g_{r_nR}(x_n)}|du_n|^2d\mu_g
           &\geq\frac{\pi(\tau_n+\bar{u}_n+2\ln r_n)^2}{-\ln r_n}\left(1-\frac{\ln(R/\delta)}{-\ln r_n}\right)^{-1}\\
           &\geq\frac{\pi(\tau_n+\bar{u}_n-2\ln r_n)^2}{-\ln r_n}-8\pi\bar{u}_n+32\pi\ln\delta-8\pi A(x_0)-16\ln(aR^2)\\
           &\quad+\pi\left(2+\frac{\tau_n}{\ln r_n}+\frac{\bar{u}_n}{\ln r_n}\right)^2\ln(R/\delta)+o_R(1)+o_\delta(1)
    \end{split}
\end{equation}
for large $n$.

Thus, \eqref{inside}, \eqref{outside} and \eqref{neck} give us 
\begin{align*}
\begin{split}
    J(u_n)&\geq 8\pi\ln(aR^2)-8\pi-16\pi\ln\delta+4\pi A(x_0)+8\pi\left(\bar{u}_n-\ln h(x_0)-\ln\int_\Sigma e^{u_0}\dif\mu_{g}\right)\\
    &\quad+\frac{\pi(\tau_n+\bar{u}_n-2\ln r_n)^2}{-\ln r_n}-8\pi\bar{u}_n+32\pi\ln\delta-8\pi A(x_0)-16\ln(aR^2)\\
           &\quad+\pi\left(2+\frac{\tau_n}{\ln r_n}+\frac{\bar{u}_n}{\ln r_n}\right)^2\ln(R/\delta)+o_R(1)+o_\delta(1)+o_n(1)\\
           &=-4\pi(A(x_0)+2\ln h(x_0))-8\pi\ln\pi-8\pi+o_R(1)+o_\delta(1)+o_n(1)\\
           &\quad+\frac{\pi(\tau_n+\bar{u}_n-2\ln r_n)^2}{-\ln r_n}+\pi\left[\left(2+\frac{\tau_n}{\ln r_n}+\frac{\bar{u}_n}{\ln r_n}\right)^2-16\right]\ln(R/\delta).
\end{split}
\end{align*}
Since $J(u_n)\leq J(u_0)$, we have $\lim_{n\to\infty}\bar{u}_n\to2\ln r_n$. Hence,
\begin{align*}
    \lim_{n\to\infty}J(u_n)\geq-4\pi\max_{x\in\Sigma}(A(x)+2\ln h(x))-8\pi\ln\pi-8\pi.
\end{align*}
By the monotonicity formula \eqref{eq:functional}, we conclude that
\begin{align*}
    J(u(t))\geq C_0=-4\pi\max_{x\in\Sigma}(A(x)+2\ln h(x))-8\pi\ln\pi-8\pi,\quad\forall t\geq0.
\end{align*}

\end{proof}

\section{Global convergence}\label{sec:convergence}

\begin{proof}[Proof of \autoref{thm:converges}]Notice that there is a sequence of positive numbers $\set{t_n}$ such that $n\leq t_n\leq n+1$ and
\begin{align*}
    \lim_{n\to\infty}\int_{\Sigma}e^{u(t_n)}\abs{\dfrac{\partial u(t_n)}{\partial t}}^2\dif\mu_{g}=0.
\end{align*}
By the lower bound of $J$ along the flow stated in \autoref{thm:lower-bounds}, the existence of mean field equation \eqref{eq:kw8pi} is reduced to construct a function whose value under $J$ is strictly less than $C_0$. In fact, such kind of functions were constructed in \cite{DinJosLiWan97} provided that 
\begin{align*}
\begin{split}
    &\Delta_{g} h(p_0)+2(b_1(p_0)k_1(p_0)+b_2(p_0)k_2(p_0))\\
    &>-\left(8\pi+b_1^2(p_0)+b_2^2(p_0)-2K(p_0)\right)h(p_0),
\end{split}
\end{align*}
where $K(x)$ is the Gaussian curvature of $\Sigma$, $\nabla h(p_0)=(k_1(p_0),k_2(p_0))$ in the normal coordinate system, $p_0$ is the maximum point of $A(q)+2\ln h(q)$ and $b_1(p_0)$, $b_2(p_0)$ are the constants in the following expression of Green function $G$:
\begin{align*}
    G(x,p_0)=-4\ln r+A(p_0)+b_1(p_0)x_1+b_2(p_0)x_2+c_1x_1^2+2c_2x_1x_2+c_3x_2^2+O(r^3),
\end{align*}
where $r(x)=\mathrm{dist}_g(x,p_0)$. The sequence $\set{u_n}$ can not blowup by our assumption. 
By \autoref{compact sequence}, $\set{u_n}$ is bounded in $H^2(\Sigma)$ and there is a function $u_\infty\in H^2(\Sigma)$ and a subsequence $\set{u_{n_k}}$ of $\set{u_{n}}$  such that 
\begin{align*}
    u_{n_k}\to u_\infty \quad \text{weakly in} \quad H^2(\Sigma)
\end{align*}
and
\begin{align*}
    u_{n_k}\to u_\infty \quad \text{in} \quad C^\alpha(\Sigma)
\end{align*}
for $\alpha\in(0,1)$ as $n_k\to\infty$.
It is easy to see that $u_\infty$ is a smooth solution to 
\begin{align*}
    -\Delta_{g} u_{\infty}+8\pi=8\pi\frac{he^{u_{\infty}}}{\int_\Sigma he^{u_{\infty}}\dif\mu_{g}}.
\end{align*}
 To obtain the strong convergence for $\set{u_{n_k}}$, please notice that 
\begin{align*}
\begin{split}
&\int_{\Sigma}\left(\Delta_{g}u_{n_k}-\Delta_{g}u_{\infty}\right)^2\\
&=\int_{\Sigma}\left(\frac{\partial{e^{u_{n_k}}}}{\partial{t}}+8\pi\left(\dfrac{he^{u_\infty}}{\int_{\Sigma}he^{u_\infty}\dif\mu_{g}}-\dfrac{he^{u_{n_k}}}{\int_{\Sigma}he^{u_{n_k}}\dif\mu_{g}}\right)\right)^2\dif\mu_{g}\\
&\leq C\int_{\Sigma}\left(e^{u_\infty}-e^{u_{n_k}}\right)^2\dif\mu_{g}+C\int_{\Sigma}\bigg|\frac{\partial{u_{n_k}}}{\partial{t}}\bigg|^2e^{u_{n_k}}\dif\mu_{g}\to 0
\end{split}
\end{align*}
as $n_k\to+\infty$.

We use {\L}ojasiewicz-Simon gradient inequality to get the global convergence of the flow. When $h>0$, one can refer to  \cite{casteras2015mean} for non-critical cases, i.e. $\rho\neq8k\pi$ and \cite{li2019convergence} for $\rho=8\pi$. In both papers, the authors just provided the paper by Simon \cite{simon1983asymptotics} and no more details were given. In this section, we give a detailed proof and some references. We divide the proof of the global convergence to several steps.

\begin{description}
\item[Step 1]
\begin{quotation}
    $\norm{u(t)^+}_{L^{\infty}\left(\Sigma\right)}\leq C.$
\end{quotation} 

Since
\begin{align*}
\dfrac{\partial u}{\partial t}\leq e^{-u}\Delta_{g}u+C.
\end{align*}
Applying the maximum principle, we have $\frac{\dif}{\dif t}\left(\max_{\Sigma}u(t)-Ct\right)\leq0$. By using the fact $\set{u_{n}}$ is bounded in $L^{\infty}\left(\Sigma\right)$ and $n\leq t_n\leq n+1$, we conclude that $u(t)^{+}$ is bounded in $L^{\infty}\left(\Sigma\right)$.

 \item[Step 2]
 \begin{quotation}
     $\norm{u(t)}_{H^1\left(\Sigma\right)}\leq C$.
 \end{quotation}
     
      Denote
\begin{align*}
A(t)=\set{x\in\Sigma: e^{u(t)}\geq\dfrac12\int_{\Sigma}e^{u_0}\dif\mu_{g}}.
\end{align*}
Then
\begin{align*}
\int_{A(t)}u(t)\dif\mu_{g}\geq \ln\left(\dfrac12\int_{\Sigma}e^{u_0}\dif\mu_{g}\right)\abs{A(t)}_{g}\geq-C,
\end{align*}
and
\begin{align*}
\int_{A(t)}u(t)\dif\mu_{g}\leq\int_{A(t)}e^{u(t)}\leq\int_{\Sigma}e^{u(t)}=\int_{\Sigma}e^{u_0}\leq C.
\end{align*}
Thus
\begin{align*}
\abs{\int_{A(t)}u(t)\dif\mu_{g}}\leq C.
\end{align*}
Notice that
\begin{align*}
\int_{\Sigma}e^{u_0}\dif\mu_{g}=&\int_{\Sigma}e^{u(t)}\dif\mu_{g}\\
=&\int_{\Sigma\setminus A(t)}e^{u(t)}\dif\mu_{g}+\int_{A(t)}e^{u(t)}\dif\mu_{g}\\
\leq&\dfrac12\int_{\Sigma}e^{u_0}\dif\mu_{g}+C\abs{A(t)}_{g}\\
=&\dfrac12\int_{\Sigma}e^{u(t)}\dif\mu_{g}+C\abs{A(t)}_{g},
\end{align*}
we get
\begin{align*}
\abs{A(t)}_{g}\geq C^{-1}.
\end{align*}
By Poincar\'e inequality, 
\begin{align*}
\norm{u(t)}_{L^2\left(\Sigma\right)}\leq& C\norm{\nabla_{g}u(t)}_{L^2\left(\Sigma\right)}+\abs{\bar{u}(t)}\\
\leq&C\norm{\nabla_{g}u(t)}_{L^2\left(\Sigma\right)}+\abs{\int_{A(t)}u(t)\dif\mu_{g}}+\abs{\int_{\Sigma\setminus A(t)}u(t)\dif\mu_{g}}\\
\leq&C\norm{\nabla_{g}u(t)}_{L^2\left(\Sigma\right)}+C+\sqrt{\abs{\Sigma\setminus A(t)}_{g}}\norm{u(t)}_{L^2\left(\Sigma\right)}.
\end{align*}
Hence
\begin{align*}
\norm{u(t)}_{L^2\left(\Sigma\right)}\leq& C\norm{\nabla_{g}u(t)}_{L^2\left(\Sigma\right)}+C.
\end{align*}
Notice that
\begin{align*}
\dfrac12\int_{\Sigma}\abs{\nabla_{g}u(t)}_{g}^2\dif\mu_{g}= J(u(t))-8\pi\bar{u}(t)+8\pi\ln\int_{\Sigma}he^{u(t)}\dif\mu_{g}\leq C+C\bar{u}(t).
\end{align*}
By Young's inequality, we conclude that
\begin{align*}
\norm{u(t)}_{H^1\left(\Sigma\right)}\leq C.
\end{align*}

\item[Step 3]
\begin{quotation}
    $\lim_{t\to\infty}\int_{\Sigma}e^{u(t)}\abs{\frac{\partial u(t)}{\partial t}}^2\dif\mu_{g}=0$.
\end{quotation}

We will follow the arguments of Brendle \cite{Bre03global} (see also \cite{casteras2015mean}). For every $\varepsilon>0$, there exist $k_0$ such  that for all $k\geq k_0$ 
\begin{align*}
\int_{\Sigma}e^{u(t_{n_k})}\abs{\dfrac{\partial u(t_{n_k})}{\partial t}}^2\dif\mu_{g}<\varepsilon.
\end{align*}
Assume for all $k\geq k_0$, 
\begin{align*}
m_k=\inf\set{t>t_{n_{k}}: \int_{\Sigma}e^{u(t)}\abs{\dfrac{\partial u(t)}{\partial t}}^2\dif\mu_{g}\geq2\varepsilon}<\infty.
\end{align*}
For $t_{n_k}\leq t\leq m_k$, we have
\begin{align*}
\int_{\Sigma}e^{u(t)}\abs{\dfrac{\partial u(t)}{\partial t}}^2\dif\mu_{g}\leq 2\varepsilon.
\end{align*}
Since $u(t)$ is bounded in $H^1\left(\Sigma\right)$ and $u(t)^{+}$ is bounded in $L^{\infty}\left(\Sigma\right)$, we conclude that
\begin{align*}
\abs{\Delta u(t)}\leq C\varepsilon+C,\quad\forall t_{n_k}\leq t\leq t_{m_k}.
\end{align*}
Thus
\begin{align*}
\norm{u(t)}_{L^{\infty}\left(\Sigma\right)}\leq C\norm{u(t)}_{H^2\left(\Sigma\right)}\leq C_{\varepsilon},\quad\forall t_{n_k}\leq t\leq m_k.
\end{align*}
Set
\begin{align*}
y(t)=\int_{\Sigma}e^{u(t)}\abs{\dfrac{\partial u(t)}{\partial t}}^2\dif\mu_{g}.
\end{align*}
Denote by $\dot u =\frac{\partial u}{\partial t},\ \ddot u=\frac{\partial^2u}{\partial t^2}$. Notice that
\begin{align*}
\dot u=e^{-u}\left(\Delta_{g}u-8\pi\right)+\dfrac{8\pi h}{\int_{\Sigma}he^{u}\dif\mu_{g}}.
\end{align*}
We get
\begin{align*}
\ddot u=&e^{-u}\Delta_{g}\dot u-\dot ue^{-u}\left(\Delta_{g}u-8\pi\right)-\dfrac{8\pi h\int_{\Sigma}he^{u}\dot u\dif\mu_{g}}{\left(\int_{\Sigma}he^{u}\dif\mu_{g}\right)^2}\\
=&e^{-u}\Delta_{g}\dot u-\dot u^2+\dfrac{8\pi h\dot u}{\int_{\Sigma}he^{u}\dif\mu_{g}}-\dfrac{8\pi h\int_{\Sigma}he^{u}\dot u\dif\mu_{g}}{\left(\int_{\Sigma}he^{u}\dif\mu_{g}\right)^2}.
\end{align*}
Hence
\begin{align*}
\dot y=&\int_{\Sigma}\left(e^{u}\dot u^3+2e^{u}\dot u\ddot u\right)\dif\mu_{g}\\
=&-2\int_{\Sigma}\abs{\nabla_{g}\dot u}^2\dif\mu_{g}-\int_{\Sigma}e^{u}\dot u^3\dif\mu_{g}+16\pi\left[\dfrac{\int_{\Sigma} he^{u}\dot u^2\dif\mu_{g}}{\int_{\Sigma}he^{u}\dif\mu_{g}}-\left(\dfrac{\int_{\Sigma}he^{u}\dot u\dif\mu_{g}}{\int_{\Sigma}he^{u}\dif\mu_{g}}\right)^2\right]\\
\leq&-2\int_{\Sigma}\abs{\nabla_{g}\dot u}^2\dif\mu_{g}-\int_{\Sigma}e^{u}\dot u^3\dif\mu_{g}+Cy.
\end{align*}
We estimate the second term in the RHS of the above inequality as follows: for all $t_{n_k}\leq t\leq m_k$,
\begin{align*}
-\int_{\Sigma}e^{u}\dot u^3\dif\mu_{g}\leq&C\int_{\Sigma}\abs{\dot u}^3\dif\mu_{g}\\
\leq&C\norm{\dot u}_{L^2\left(\Sigma\right)}^2\norm{\dot u}_{H^1\left(\Sigma\right)}\\
\leq&C_{\varepsilon}y\left(y+\int_{\Sigma}\abs{\nabla_{g}\dot u}^2_{g}\dif\mu_{g}\right)^{1/2}.
\end{align*}
Since $\int_{\Sigma}e^{u}\dot u\dif\mu_{g}=0$, applying the Poincar\'e inequality to obtain
\begin{align*}
\int_{\Sigma}e^{u}\dot u^2\dif\mu_{g}\leq\dfrac{1}{\lambda_{1,e^{u}g}}\int_{\Sigma}\abs{\nabla_{e^{u}g}\dot u}^2_{e^{u}g}\dif\mu_{e^{u}g}=\dfrac{1}{\lambda_{1,e^{u}g}}\int_{\Sigma}\abs{\nabla_{g}\dot u}^2_{g}\dif\mu_{g}\leq C\int_{\Sigma}\abs{\nabla_{g}\dot u}^2_{g}\dif\mu_{g}.
\end{align*}
Thus for all $t_{n_k}\leq t\leq m_k$,
\begin{align*}
-\int_{\Sigma}e^{u}\dot u^3\dif\mu_{g}\leq C_{\varepsilon}y^{1/2}\left(\int_{\Sigma}\abs{\nabla_{g}\dot u}^2_{g}\dif\mu_{g}\right)^{1/2},
\end{align*}
which implies
\begin{align*}
\dot y\leq C_{\varepsilon}y.
\end{align*}
Hence
\begin{align*}
y(t_{m_k})\leq y(t_{n_k})+C_{\varepsilon}\int_{t_{n_k}}^{\infty}y(t)\dif t.
\end{align*}
Thus
\begin{align*}
\varepsilon\leq C_{\varepsilon}\int_{t_{n_k}}^{\infty}y(t)\dif t\to0,\quad\text{as}\quad  t_{n_k}\to\infty
\end{align*}
which is a contradiction. Therefore
\begin{align*}
\lim_{t\to\infty}\int_{\Sigma}e^{u(t)}\abs{\dfrac{\partial u(t)}{\partial t}}^2\dif\mu_{g}=0.
\end{align*}

\item[Step 4] 
\begin{quotation}
    $
\norm{u(t)}_{H^2\left(\Sigma\right)}\leq C$ which implies that $\norm{u(t)}_{C^{\gamma}\left(\Sigma\right)}\leq C_{\gamma}$ for every $0<\gamma<1$.
\end{quotation}

This is a direct consequence of the standard elliptic estimates and Sobolev inequalities.

\item[Step 5]
\begin{quotation}
    $\lim_{t\to\infty}\norm{u(t)-u_\infty}_{L^2\left(\Sigma\right)}=0$ implies  $\lim_{t\to\infty}\norm{u(t)-u_{\infty}}_{H^2\left(\Sigma\right)}=0$.
\end{quotation}

Since
\begin{align*}
\dfrac{\partial e^{u}}{\partial t}=\Delta_{g}\left(u-u_{\infty}\right)+8\pi\left(\dfrac{he^{u}}{\int_{\Sigma}he^{u}\dif\mu_{g}}-\dfrac{he^{u_{\infty}}}{\int_{\Sigma}he^{u_{\infty}}\dif\mu_{g}}\right),
\end{align*}
we get
\begin{align*}
\abs{\Delta_{g}\left(u(t)-u_{\infty}\right)}\leq C\left(\abs{\dfrac{\partial u(t)}{\partial t}}+\abs{u(t)-u_{\infty}}+\norm{u(t)-u_{\infty}}_{L^1\left(\Sigma\right)}\right)
\end{align*}
which implies 
\begin{align*}
\norm{u(t)-u_{\infty}}_{H^2\left(\Sigma\right)}\leq C\left(\norm{\dfrac{\partial u(t)}{\partial t}}_{L^2\left(\Sigma\right)}+\norm{u(t)-u_{\infty}}_{L^2\left(\Sigma\right)}\right).
\end{align*}
The claim follows by letting $t\to+\infty$.

\item[Step 6]
\begin{quotation}
    There are positive constants $\sigma$ and $\theta\in(1/2,1)$ such that
\begin{align*}
\forall u\in H^2\left(\Sigma\right),\ \norm{u-u_{\infty}}_{L^2\left(\Sigma\right)}<\sigma \quad \Longrightarrow \quad \abs{J(u)-J(u_{\infty})}^{\theta}\leq \norm{\mathcal{M}(u)}_{H^{2}\left(\Sigma\right)}.
\end{align*}
\end{quotation}

Notice that the functional $J:H^1\left(\Sigma\right)\To\mathbb{R}$ is analytic and the gradient map $\mathcal{M}:H^1\left(\Sigma\right)\To H^{-1}\left(\Sigma\right)$ is given by
\begin{align*}
u\mapsto\mathcal{M}(u)=-\Delta_{g}u-8\pi\left(\dfrac{he^u}{\int_{\Sigma}he^{u}\dif\mu_{g}}-1\right).
\end{align*}
The Jacobi operator $\mathcal{L}:H^1\left(\Sigma\right)\To H^{-1}\left(\Sigma\right)$ of $J$ at a critical point $u\in C^{\infty}\left(\Sigma\right)$ of $J$ is given by
\begin{align*}
\xi\mapsto\mathcal{L}(\xi)=-\Delta_{g}\xi-8\pi\left(\dfrac{he^{u}\xi}{\int_{\Sigma}he^{u}\dif\mu_{g}}-\dfrac{he^{u}\int_{\Sigma}he^{u}\xi\dif\mu_{g}}{\left(\int_{\Sigma}he^{u}\dif\mu_{g}\right)^2}\right)
\end{align*}
is a Fredohom operator with index zero. 
Since $\mathcal{M}\left(H^2\left(\Sigma\right)\right)\subset L^2\left(\Sigma\right)$, applying the {\L}ojasiewicz-Simon gradient inequality (cf. \cite[Proposition 1.3]{Jen98simple} or \cite[Theorem 2]{FeeMar19Lojasiewicz}),  there are positive constants $\tilde\sigma$ and $\theta\in(1/2,1)$ such that
\begin{align*}
\forall u\in H^2\left(\Sigma\right),\ \norm{u-u_{\infty}}_{H^2\left(\Sigma\right)}<\tilde\sigma \quad \Longrightarrow \quad \abs{J(u)-J(u_{\infty})}^{\theta}\leq \norm{\mathcal{M}(u)}_{L^{2}\left(\Sigma\right)}.
\end{align*}
Hence we obtain this claim by choosing $\sigma$ small.

\item[Step 7]
\begin{quotation}
    $\lim_{t\to\infty}\norm{u(t)-u_{\infty}}_{L^2\left(\Sigma\right)}=0$ which gives the global convergence.
\end{quotation}

We will flow the approach of Jendoubi \cite{Jen98simple}. For every $0<\varepsilon<<\sigma$, there exist $k_1$ such that for all $k\geq k_1$,
\begin{align*}
\norm{u(t_{k})-u_{\infty}}_{L^2\left(\Sigma\right)}<\varepsilon.
\end{align*}
Assume for all $k\geq k_1$,
\begin{align*}
s_k=\inf\set{t>t_{n_k}: \norm{u(t)-u_{\infty}}_{L^2\left(\Sigma\right)}\geq \sigma}<\infty.
\end{align*}
Then for all $n_k\leq t<s_k$,
\begin{align*}
\norm{u(t)-u_{\infty}}_{L^2\left(\Sigma\right)}<\sigma=\norm{u(s_k)-u_{\infty}}_{L^2\left(\Sigma\right)}.
\end{align*}
Without loss of generality, assume $J(u(t))>J(u_{\infty})$ for all $t>0$. For $t_{n_k}\leq t<s_k$, we have
\begin{align*}
-\dfrac{\dif}{\dif t}\left(J(u(t))-J(u_{\infty})\right)^{1-\theta}=&-\left(1-\theta\right)\left(J(u(t))-J(u_{\infty})\right)^{-\theta}\dfrac{\dif}{\dif t}J(u(t))\\
=&\left(1-\theta\right)\left(J(u(t))-J(u_{\infty})\right)^{-\theta}\norm{e^{u(t)/2}\dfrac{\partial u(t)}{\partial t}}_{L^2\left(\Sigma\right)}^2\\
\geq&(1-\theta)\norm{\dfrac{\partial u(t)}{\partial t}}_{L^2\left(\Sigma\right)}.
\end{align*}
Thus
\begin{align*}
\int_{t_{n_k}}^{s_k}\norm{\dfrac{\partial u(t)}{\partial t}}_{L^2\left(\Sigma\right)}\dif t\leq\dfrac{1}{1-\theta}\left(J(u(t_{n_k}))-J(u_{\infty})\right)^{1-\theta}.
\end{align*}
Since
\begin{align*}
\dfrac{\dif}{\dif t}\norm{u(t)-u_{\infty}}_{L^2\left(\Sigma\right)}\leq\norm{\dfrac{\partial u(t)}{\partial t}}_{L^2\left(\Sigma\right)},
\end{align*}
we get
\begin{align*}
\sigma=&\norm{u(s_k)-u_{\infty}}_{L^2\left(\Sigma\right)}\\
\leq&\norm{u(t_{n_k})-u_{\infty}}_{L^2\left(\Sigma\right)}+\int_{t_{n_k}}^{s_k}\norm{\dfrac{\partial u(t)}{\partial t}}_{L^2\left(\Sigma\right)}\dif t\\
\leq&\norm{\dfrac{\partial u(t_{n_k})}{\partial t}}_{L^2\left(\Sigma\right)}+\dfrac{1}{1-\theta}\left(J(u(t_{n_k}))-J(u_{\infty})\right)^{1-\theta}
\end{align*}
which is a contradiction when $n_{k}\to+\infty$. Hence we have $s_{k_2}=+\infty$ for some $k_2$. We conclude that
\begin{align*}
\int_{0}^{\infty}\norm{\dfrac{\partial u(t)}{\partial t}}_{L^2\left(\Sigma\right)}\dif t<+\infty
\end{align*}
which gives
\begin{align*}
\lim_{t\to\infty}\norm{u(t)-u_{\infty}}_{L^2\left(\Sigma\right)}=0.
\end{align*}

\end{description}

\end{proof}



\end{document}